\newtheorem{theorem}{Theorem}
\newtheorem{lemma}{Lemma}
\theoremstyle{definition}
\newtheorem{assumption}{Assumption}
\begin{document}
\title {Rational taxation in an open access fishery model}

\author{Dmitry B. Rokhlin}
\author{Anatoly Usov}

\address{Institute of Mathematics, Mechanics and Computer Sciences,
              Southern Federal University,
Mil'chakova str., 8a, 344090, Rostov-on-Don, Russia}
\email[Dmitry B. Rokhlin]{rokhlin@math.rsu.ru}
\email[Anatoly Usov]{usov@math.rsu.ru}

\thanks{The research is supported by Southern Federal University, project 213.01-07-2014/07.}

\begin{abstract}
We consider a model of fishery management, where $n$ agents exploit a single population with strictly concave continuously differentiable growth function of Verhulst type. If the agent actions are coordinated and directed towards the maximization of
the discounted cooperative revenue, then the biomass stabilizes at the level, defined by the well known ``golden rule''. We show that for independent myopic harvesting agents such optimal (or $\varepsilon$-optimal) cooperative behavior can be stimulated by the proportional tax, depending on the resource stock, and equal to the marginal value function of the cooperative problem. To implement this taxation scheme we prove that the mentioned value function is strictly concave and continuously differentiable, although the instantaneous individual revenues may be neither concave nor differentiable.
\end{abstract}
\subjclass[2010]{91B76, 49J15, 91B64}
\keywords{Optimal harvesting, marginal value function, stimulating prices, myopic agents, optimal control}

\maketitle

\section{Introduction}
\label{sec:1}
An unregulated open access to marine resources, where many individual users are involved in the fishery, may easily lead to the over-exploitation or even extinction of fish populations. Moreover, it results in zero rent. These negative consequences of the  unregulated open access (the ''tragedy of commons'': \cite{Har68}) were widely discussed in the literature: see \cite{Gor54,Cla79,Cla06,Arn09}. May be the most evident reason for the occurrence of these phenomena is the myopic behavior of competing harvesting agents, who are interested in the maximization of instantaneous profit flows, and not in the conservation of the population in the long run. In the present paper we consider the problem of rational regulation of an open access fishery, using taxes as the only economical instrument. Other known instruments include fishing quotas of different nature, total allowable catch, limited entry, sole ownership, community rights, various economic restrictions, etc: see, e.g, \cite{Cla06,Arn09}.

Assume for a moment that $n$ agents coordinate their efforts to maximize the aggregated long-run discounted profit. The related aggregated agent, which can be considered as a sole owner of marine fishery resources, conserves the resource under optimal strategy, unless the discounting rate is very large. How such an acceptable cooperative behavior can be realized in practice?

We consider the following scheme. Suppose that some regulator (e.g., the coastal states), being aware of the revenue function and maximal productivity of each agent, declares the amount of proportional tax on catch. Roughly speaking, it turns out that if this tax is equal to the marginal indirect utility (marginal value function) of the cooperative optimization problem, then the myopic profit maximizing agents will follow an optimal cooperative strategy, maximizing the aggregated long-run discounted profit. The idea of using such taxes in harvesting management was often expressed in the bioeconomic literature: see \cite{Cla80}, \cite{McKel89}, \cite[Chapter 10]{HanShoWhi97}, \cite[Chapter 7]{Il09}. Our goal is to study this idea more closely from the mathematical point of view.

The first theoretical question we encounter, trying to implement the mentioned taxation scheme, concerns the differentiability of the value function $v$ of the cooperative problem. Assuming that the population growth function is strictly concave and continuously differentiable, in Sections \ref{sec:2} and \ref{sec:3} we prove $v$ inherits these properties, although the instantaneous revenue functions may be non-concave.

The differentiability of $v$ is proved by the tools from optimal control  and convex analysis. Our approach relies on the characterization of $v$ as the unique solution of the related Hamilton-Jacobi-Bellamn equation.
We neither use the general results like \cite{RinZapSan12}, nor the related technique. At the same time, our results are not covered by \cite{RinZapSan12}. Simultaneously we construct optimal strategies and prove that optimal trajectories are attracted to the biomass level $\widehat x$, defined by the well known ``golden rule''. This level depends on the discounting rate, which is at regulator's disposal.

If the agent revenue function are non-concave, then an optimal solution of the infinite horizon cooperative problem may exist only in the class of relaxed (or randomized) harvesting  strategies. Such strategies can hardly be realized in practice, and certainly cannot be stimulated by taxes. Nevertheless, in Section \ref{sec:4} we show that piecewise constant strategies (known as the ``pulse fishing'') of myopic agents, stimulated by the proportional tax $v'\alpha$ on the fishing intensity $\alpha$, are $\varepsilon$-optimal for the cooperative problem. Moreover, the related trajectory is retained in any desired neighbourhood of $\widehat x$ for large values of time. Finally, we introduce the notion of the critical tax $v'(\widehat x)$ and prove that it can only increase, when the agent community widens.

\section{Cooperative harvesting problem: the case of concave revenues}
\label{sec:2}
Let a population biomass $X$ satisfy the differential equation
\begin{equation} \label{2.1}
X_t=x+\int_0^t b(X_s)\,ds-\sum_{i=1}^n \int_0^t \alpha_s^i\,ds,
\end{equation}
where $b$ is the growth rate of the population, and $\alpha^i$ is the harvesting rate of $i$-th agent. We assume that $b$ is a \emph{differentiable strictly concave} function defined on an open neighbourhood of $[0,1]$, and
$$b(x)>0,\quad x\in (0,1),\quad b(0)=b(1)=0.$$
The widely used Verhulst growth function $b(x)=x(1-x)$ is a typical example. Agent harvesting strategies $\alpha^i$ are (Borel) measurable functions with values in the intervals $[0,\overline \alpha^i]$, $\overline\alpha^i>0$. A harvesting strategy $\alpha=(\alpha^1,\dots,\alpha^n)$ is called \emph{admissible} if the solution $X^{x,\alpha}$ of (\ref{2.1}) stays in $[0,1]$ forever: $X_t^{x,\alpha}\in [0,1]$, $t\ge 0$. Note that for given $\alpha$ the solution $X^{x,\alpha}$ is unique, since $b$, being concave, is Lipschitz continuous. The set of admissible strategies, corresponding to an initial condition $x$, is denoted by $\mathscr A_n(x)$.

Consider the cooperative objective functional
$$ J_n(x,\alpha)=\sum_{i=1}^n\int_0^\infty e^{-\beta t} f_i(\alpha_t^i)\,dt,\quad \beta>0$$
of agent community. \emph{We always assume} that the instantaneous revenue function $f_i:[0,\overline \alpha^i]\mapsto\mathbb R_+$ of $i$-th agent is at least \emph{continuous}, and $f_i(0)=0$. Let
\begin{equation} \label{2.2}
 v(x)=\sup_{\alpha\in\mathscr A_n(x)} J_n(x,\alpha),\quad x\in [0,1]
\end{equation}
be the value function of the cooperative optimization problem.

When studying the properties of the value function it is convenient to reduce the dimension of the control vector to $1$. Recall that the function
$$ (g_1\oplus\dots\oplus g_n)(x)=\sup\{g_1(x_1)+\dots+g_n(x_n):x_1+\dots+x_n=x\}$$
is called the \emph{infimal convolution} of $g_1,\dots,g_n$. Let us extend the functions $f_i$ to $\mathbb R$ by the values $f_i(u)=-\infty$, $u\not\in [0,\overline\alpha^i]$ and put
\begin{align} \label{2.3}
 F(q)& =\sup\{f_1(\alpha_1)+\dots+f_n(\alpha_n):\alpha_1+\dots+\alpha_n=q\}\nonumber\\
     & =-((-f_1)\oplus\dots\oplus(-f_n))(q).
\end{align}
The function $F$ is finite on $[0,\overline q]$, $\overline q=\sum_{i=1}^n\overline\alpha^i$, and takes the value $-\infty$ otherwise. From the properties of an infimal convolution it follows that if $f_i$ are continuous (resp., concave), then $F$ is also continuous (resp., concave): see, e.g., \cite{Str96} (Corollary 2.1 and Theorem 3.1).

Let $q:\mathbb R_+\mapsto [0,\overline q]$ be a measurable function. Consider the equation
\begin{equation} \label{2.4}
 X_t^{x,q}=x+\int_0^t b(X_s^{x,q})\,ds-\int_0^t q_s\,ds
\end{equation}
instead of (\ref{2.1}). If $X_t^{x,q}\ge 0$, then the strategy $q$ is called admissible. The set of such strategies is denoted by $\mathscr A(x)$. Using an appropriate measurable selection theorem (see \cite[Theorem 5.3.1]{Sri98}), we conclude that for any $q\in\mathscr A(x)$ there exists $\alpha\in\mathscr A_n(x)$ such that $F(q_t)=\sum_{i=1}^n f_i(\alpha^i_t)$. It follows that the value function (\ref{2.2}) admits the representation
$$ v(x)=\sup_{q\in\mathscr A(x)} J(x,q),\quad J(x,q)=\int_0^\infty e^{-\beta t} F(q_t)\,dt.$$

Clearly, for any measurable control $q:\mathbb R_+\mapsto [0,\overline q]$ the trajectory $X^{x,q}$ cannot leave the interval $[0,1]$ through the right boundary. Denote by
$$ \tau^{x,q}=\inf\{t\ge 0:X_t^{x,q}=0\}$$
the time of population extinction. As usual, we put $\tau^{x,\alpha}=+\infty$ if $X^{x,\alpha}>0$. Note that $q_t=0$, $t\ge\tau^{x,q}$ for any admissible control $q$.

First, we prove directly that $v$ inherits the concavity property of $f_i$ (see Lemma \ref{lem:2} below).

\begin{lemma} \label{lem:1}
Let $Y$ be a continuous solution of the inequality
$$Y_t\le x+\int_0^t b(Y_s)\,ds-\int_0^t q_s\,ds.$$
Then $Y_t\le X^{x,q}_t$, $t\le\tau:=\inf\{s\ge 0:Y_s=0\}$.
\end{lemma}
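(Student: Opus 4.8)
The plan is to treat this as a differential-inequality comparison between the subsolution $Y$ and the exact solution $X^{x,q}$, and to prove it by a Gronwall argument with an integrating factor. I set $w_t=Y_t-X_t^{x,q}$; since $Y_0\le x=X_0^{x,q}$ we have $w_0\le0$, and the goal is to show $w_t\le0$ for $t\in[0,\tau]$. I will use the hypothesis in its local form, namely that on every subinterval $[s,t]\subseteq[0,\tau]$ one has $Y_t-Y_s\le\int_s^t(b(Y_r)-q_r)\,dr$ (this is the natural reading of the stated subsolution inequality and is what the argument genuinely requires). Subtracting the corresponding identity for $X^{x,q}$ gives $w_t-w_s\le\int_s^t\bigl(b(Y_r)-b(X_r^{x,q})\bigr)\,dr$ on $[0,\tau]$.

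Next I linearise the growth term. Because $b$ is concave, hence Lipschitz with some constant $L$ on a neighbourhood of $[0,1]$, I can write $b(Y_r)-b(X_r^{x,q})=c_r\,w_r$, where $c_r=\bigl(b(Y_r)-b(X_r^{x,q})\bigr)/w_r$ when $w_r\neq0$ and $c_r=0$ otherwise; this $c$ is measurable with $|c_r|\le L$. Thus $w$ is an absolutely continuous function satisfying $\dot w_t\le c_t\,w_t$ a.e. on $[0,\tau]$, with $w_0\le0$.

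The main obstacle is that $b$ is \emph{not} monotone — it increases and then decreases — so the sign of $c_t$ is indefinite, and the elementary Gronwall inequality, which needs a nonnegative kernel, does not apply; a negative excursion of $w$ in the decreasing regime of $b$ could in principle feed back as positive growth. The device that removes this difficulty is the integrating factor $\mu_t=\exp\!\bigl(-\int_0^t c_s\,ds\bigr)$, which is \emph{strictly positive irrespective of the sign of} $c$. Then
$$\frac{d}{dt}\bigl(w_t\mu_t\bigr)=\mu_t\bigl(\dot w_t-c_tw_t\bigr)\le0\qquad\text{a.e. on }[0,\tau],$$
so $t\mapsto w_t\mu_t$ is non-increasing and $w_t\mu_t\le w_0\mu_0=w_0\le0$. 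Since $\mu_t>0$, this yields $w_t\le0$, i.e. $Y_t\le X_t^{x,q}$, on $[0,\tau]$.

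A few routine points remain to be checked: that $w$ is absolutely continuous so the product rule holds a.e. (immediate once the subsolution inequality is read locally); that $Y_t$ and $X_t^{x,q}$ stay in the neighbourhood of $[0,1]$ on which $b$ is $L$-Lipschitz, which holds on $[0,\tau]$ since there $Y\ge0$ and, by the conclusion itself, $X_t^{x,q}\ge Y_t\ge0$; and that stopping at $\tau$ is exactly what keeps us in the region where $Y$ is nonnegative. An equivalent route, avoiding the integrating factor, is a first-contact argument: perturb $X^{x,q}$ to a strict supersolution $X^\varepsilon$ with $\dot X^\varepsilon=b(X^\varepsilon)-q+\varepsilon$, show $Y<X^\varepsilon$ cannot fail by examining the first time of contact (where the strict $+\varepsilon$ slack contradicts $\dot w\le c_t w$), and let $\varepsilon\downarrow0$; but the integrating-factor computation is shorter, and I would present that.
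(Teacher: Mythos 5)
Your preliminary observation about the hypothesis is sound and worth keeping: the lemma does require the ``local'' reading $Y_t-Y_s\le\int_s^t(b(Y_r)-q_r)\,dr$ (under the purely global inequality from $0$ to $t$ one can build a continuous $Y$ that under-spends its budget early and then overtakes $X^{x,q}$ later without ever hitting zero), and the paper's own proof uses exactly this local form when it writes $Z_t\le\int_{t_0}^t(b(Y_s)-b(X^{x,q}_s))\,ds$ starting from the crossing time $t_0$ rather than from $0$. Your overall route, however, is genuinely different from the paper's: you linearise $b(Y)-b(X^{x,q})=c_tw_t$ with a bounded signed coefficient and kill the sign problem with an integrating factor, whereas the paper runs a first-contact argument.

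The genuine gap is your regularity claim: ``that $w$ is absolutely continuous \dots\ is immediate once the subsolution inequality is read locally'' is false. The local inequality only says that $t\mapsto Y_t-\int_0^t(b(Y_s)-q_s)\,ds$ is non-increasing, and a continuous non-increasing function may be singular (minus a Cantor function), so $Y$, hence $w$, is continuous and of bounded variation but need not be absolutely continuous. Your argument cannot do without this: once you pass from the local integral inequality to the a.e.\ pointwise inequality $\dot w_t\le c_tw_t$, the singular part of $w$ becomes invisible, and for a non-AC function the statement ``$\frac{d}{dt}(w_t\mu_t)\le0$ a.e.'' does not imply that $w\mu$ is non-increasing (the Cantor function has derivative $0$ a.e.\ and still increases). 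Nor can you retreat to the global integral inequality $w_t\le w_0+\int_0^tc_rw_r\,dr$, since, as you yourself point out, with a signed kernel that inequality does not force $w\le0$. Two repairs are available. One is to keep your integrating factor but work in the Stieltjes calculus: write $w_t=w_0+\int_0^tc_rw_r\,dr-N_t$ with $N$ continuous non-decreasing, and integrate by parts to get $w_t\mu_t-w_0=-\int_0^t\mu_s\,dN_s\le0$. The other is the paper's argument: assume $Y_{t_1}>X^{x,q}_{t_1}$, let $t_0$ be the last time in $[0,t_1]$ with $Y_{t_0}\le X^{x,q}_{t_0}$, and observe that on $[t_0,t_1]$ one has $Z=Y-X^{x,q}\ge0$, so the Lipschitz bound gives $Z_t\le K\int_{t_0}^tZ_s\,ds$ with the \emph{non-negative} constant kernel $K$; the integral form of Gronwall applies to the merely continuous $Z$ and forces $Z\equiv0$ on $[t_0,t_1]$, a contradiction. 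That localization simultaneously removes the signed-kernel issue you identify as the ``main obstacle'' and dispenses with any differentiability of $Y$.
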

\begin{proof}
We follow \cite{BirRot89} (Chapter 1, Theorem 7).
Assume that $Y_{t_1}>X^{x,q}_{t_1}$, $t_1\le\tau$. Let
$t_0=\max\{t\in [0,t_1]:Y_{t}\le X^{x,q}_{t}\}$. We have
\begin{equation} \label{2.5}
Y_{t_0}=X^{x,q}_{t_0},\quad Y_{t}>X^{x,q}_{t},\quad t\in (t_0,t_1].
\end{equation}
The function $Z=Y-X^{x,q}$ satisfies the inequality
$$ 0\le Z_t\le\int_{t_0}^t (b(Y_s)-b(X^{x,q}_s))\,ds\le K \int_{t_0}^t Z_s\,ds,\quad t\in [t_0,t_1],$$
where $K$ is the Lipschitz constant of $b$. By the Gronwall inequality (see, e.g., \cite[Theorem 1.2.1]{Pac98}) we get a contradiction with (\ref{2.5}): $Z_t=0$, $t\in[t_0,t_1]$.
\end{proof}

\begin{lemma} \label{lem:2}
The function $v$ is non-decreasing. If $f_i$ are concave, then $v$ is concave.
\end{lemma}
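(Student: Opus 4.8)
The plan is to work entirely with the reduced scalar problem $v(x)=\sup_{q\in\mathscr A(x)}J(x,q)$ and to exploit two structural facts: the reward $J(x,q)=\int_0^\infty e^{-\beta t}F(q_t)\,dt$ does not actually depend on the initial state $x$, and the dynamics (\ref{2.4}) obey the comparison principle packaged in Lemma \ref{lem:1}. Everything then reduces to comparing \emph{feasible sets} $\mathscr A(x)$, since the objective is a functional of $q$ alone.

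For monotonicity I would fix $x_1\le x_2$ and an arbitrary $q\in\mathscr A(x_1)$, and show $q\in\mathscr A(x_2)$. Setting $Y=X^{x_1,q}$, equation (\ref{2.4}) gives $Y_t=x_1+\int_0^t b(Y_s)\,ds-\int_0^t q_s\,ds\le x_2+\int_0^t b(Y_s)\,ds-\int_0^t q_s\,ds$, so $Y$ is a subsolution for the datum $x_2$. Lemma \ref{lem:1} yields $X^{x_1,q}_t=Y_t\le X^{x_2,q}_t$ up to the extinction time $\tau=\tau^{x_1,q}$. As $q\in\mathscr A(x_1)$ we have $X^{x_1,q}_t\ge0$, hence $X^{x_2,q}_t\ge0$ for $t\le\tau$; for $t\ge\tau$ the control vanishes ($q_t=0$) and $X^{x_2,q}$ solves $\dot X=b(X)$ with $b\ge0$, $b(0)=0$, so it stays nonnegative. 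Thus $\mathscr A(x_1)\subseteq\mathscr A(x_2)$, and since $J$ is independent of $x$, taking suprema gives $v(x_1)\le v(x_2)$.

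For concavity assume the $f_i$, hence $F$, are concave. Fix $x_1,x_2\in[0,1]$, $\lambda\in(0,1)$ (the endpoints being immediate), set $x=\lambda x_1+(1-\lambda)x_2$, pick $q^1\in\mathscr A(x_1)$, $q^2\in\mathscr A(x_2)$, and form $q=\lambda q^1+(1-\lambda)q^2$, again valued in $[0,\overline q]$. Concavity of $F$ gives at once $J(x,q)\ge\lambda J(x_1,q^1)+(1-\lambda)J(x_2,q^2)$, so the whole issue is admissibility of $q$. Here I would introduce $Y=\lambda X^{x_1,q^1}+(1-\lambda)X^{x_2,q^2}$ and use the \emph{concavity of $b$}: since $\lambda b(X^{x_1,q^1}_s)+(1-\lambda)b(X^{x_2,q^2}_s)\le b(Y_s)$, the combined curve satisfies $Y_t\le x+\int_0^t b(Y_s)\,ds-\int_0^t q_s\,ds$. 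Lemma \ref{lem:1} then forces $Y_t\le X^{x,q}_t$ up to the first zero of $Y$, and $Y\ge0$ as a convex combination of nonnegative trajectories, so $X^{x,q}_t\ge0$ there. Past the first zero of $Y$ both $q^1$ and $q^2$ vanish (a zero of $Y$ is a common zero of both trajectories, where extinction forces the controls to $0$), hence $q=0$ and $X^{x,q}$ stays nonnegative by $b\ge0$. Therefore $q\in\mathscr A(x)$ and $v(x)\ge\lambda J(x_1,q^1)+(1-\lambda)J(x_2,q^2)$; taking suprema over $q^1,q^2$ independently yields $v(x)\ge\lambda v(x_1)+(1-\lambda)v(x_2)$.

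The routine Gronwall estimate is already absorbed into Lemma \ref{lem:1}; the conceptual core is that concavity of the growth law $b$ makes the convex combination of two admissible trajectories a subsolution of the state equation, which the comparison principle dominates by the genuine trajectory driven by the averaged control. I expect the only delicate point to be the behaviour after extinction: one must verify that once $Y$ (or, in the monotonicity step, the lower trajectory) reaches $0$, the relevant control is $0$ as well, so that $\dot X=b(X)$ with $b(0)=0$, $b\ge0$ keeps $X^{x,q}$ in $[0,1]$ for all remaining times.
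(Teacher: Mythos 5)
Your proposal is correct and follows essentially the same route as the paper: monotonicity via the comparison principle of Lemma \ref{lem:1} applied to the same control with a larger initial datum, and concavity by showing that the convex combination of two admissible trajectories is a subsolution (using concavity of $b$), dominating it by the trajectory of the averaged control, and then invoking concavity of $F$. Your extra care about the post-extinction regime (where the controls vanish and $\dot X=b(X)$ keeps the state nonnegative) is a detail the paper leaves implicit, but it does not change the argument.
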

\begin{proof}
Let $q\in\mathscr A(x)$ and $y>x$. Then
$$ X^{x,q}_t\le y+\int_0^t b(X^{x,q}_s)\,ds-\int_{t_0}^t q_s\,ds.$$
By Lemma \ref{lem:1} we have $X^{x,q}_t\le X^{y,q}_t$ for $t\le\tau^{x,q}$, and hence for all $t\ge 0$. It follows that $\mathscr A(x)\subset\mathscr A(y)$ and $v(x)\le v(y)$.

Let $0\le x^1<x^2\le 1$, $x=\gamma_1 x^1+\gamma_2 x^2$, $\gamma_1,\gamma_2>0$, $\gamma_1+\gamma_2=1$. For $q^i\in\mathscr A(x^i)$ by the concavity of $b$ we have
$$ \gamma_1 X_t^{x^1,q^1}+\gamma_2 X_t^{x^2,q^2}\le x + \int_0^t b(\gamma_1 X_t^{x^1,q^1}+\gamma_2 X_t^{x^2,q^2})\,dt-
   \int_0^t(\gamma_1 q_t^1+\gamma_2 q_t^2)\,dt.$$
Put $q=\gamma_1 q^1+\gamma_2 q^2$.
Applying Lemma \ref{lem:1} to $Y=\gamma_1 X^{x^1,q^1}+\gamma_2 X^{x^2,q^2}$ and $X^{x,q}$ we get the inequality $Y\le X^{x,q}$. It follows that $q\in\mathscr A(x)$. By the concavity of $F$ we obtain:
$$ J(x,q)\ge\int_0^\infty e^{-\beta t}\left(\gamma_1 F(q_t^1)+\gamma_2 F(q_t^2)\right)\,dt=\gamma_1 J(x^1,q^1)+\gamma_2 J(x^2,q^2).$$
It follows that $v$ is concave: $v(x)\ge\gamma_1 v(x^1)+\gamma_2 v(x^2)$.
\end{proof}

Let us introduce the Hamiltonian
\begin{align}
H(x,z) &=b(x) z+\widehat F(z),\nonumber\\
\widehat F(z)&=\sup_{q\in[0,\overline q]}(F(q)-q z)=
\max_{q\in[0,\overline\alpha_1+\dots+\overline\alpha_n]}\max\left\{\sum_{i=1}^n f_i(\alpha_i)-zq:\sum_{j=1}^n\alpha_j=q\right\}\nonumber\\
&=\sum_{i=1}^n\max_{\alpha_i\in[0,\overline\alpha^i]} (f_i(\alpha_i)-z\alpha_i).\label{2.6}
\end{align}
Recall that a continuous function $w:[0,1]\mapsto\mathbb R$ is called a \emph{viscosity subsolution} (resp., a \emph{viscosity supersolution}) of the Hamilton-Jacobi-Bellman (HJB) equation
\begin{equation} \label{2.7}
\beta w(x)-H(x,w'(x))=0
\end{equation}
on a set $K\subset [0,1]$, if for any $x\in K$ and any test function $\varphi\in C^1(\mathbb R)$ such that $x$ is a local maximum (resp., minimum) point of $w-\varphi$, relative to $K$, the inequality
$$ \beta w(x)-H(x,\varphi'(x))\le 0\quad (\textrm{resp.},\ \ge 0)$$
holds true. A function $w\in C([0,1])$ is called a \emph{constrained viscosity solution} (see \cite{Son86}) of (\ref{2.7}) if $u$ is a viscosity subsolution on $[0,1]$ and a viscosity supersolution on $(0,1)$.

By Lemma \ref{lem:2} the value function is continuous. Hence, by Theorem 2.1 of \cite{Son86}, we conclude that $v$ is the unique constrained viscosity solution of (\ref{2.7}). However, in our case it is possible to give a more simple characterization of $v$.
\begin{lemma} \label{lem:3}
Assume that $f_i$ are concave. Then $v$ is the unique continuous function on $[0,1]$, with $v(0)=0$, satisfying the HJB equation (\ref{2.7}) on $(0,1)$ in the viscosity sense.
\end{lemma}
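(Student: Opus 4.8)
The plan is to reduce the statement to the already-established characterization of $v$ as the \emph{unique constrained viscosity solution} of (\ref{2.7}), by showing that the two sets of conditions describe the same function. First I would verify that $v$ itself meets the requirements of the lemma, and then prove the converse: that every continuous $w$ with $w(0)=0$ solving (\ref{2.7}) on $(0,1)$ in the viscosity sense is necessarily a constrained viscosity solution, so that Sontag's uniqueness (Theorem 2.1 of \cite{Son86}) yields $w=v$.

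For the existence part, $v$ is continuous by Lemma \ref{lem:2}, and, being a constrained viscosity solution, it is in particular a subsolution on $[0,1]\supset(0,1)$ and a supersolution on $(0,1)$, hence a viscosity solution of (\ref{2.7}) on $(0,1)$. That $v(0)=0$ follows from the left boundary being absorbing: concavity together with $b(0)=0$ gives $b(x)\le b'(0)x$, so every admissible trajectory from $x=0$ satisfies $X_t\le b'(0)\int_0^t X_s\,ds$; the Gronwall inequality and $X_t\ge0$ force $X^{0,q}\equiv0$ and $q\equiv0$, whence $v(0)=\int_0^\infty e^{-\beta t}F(0)\,dt=0$ because $F(0)=\sum_i f_i(0)=0$.

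For uniqueness, let $w$ be as in the statement. The supersolution property on $(0,1)$ holds by hypothesis, so it remains to upgrade the subsolution property from $(0,1)$ to all of $[0,1]$, that is, to the two endpoints. At $x=0$ this is automatic: by (\ref{2.6}) one has $H(0,z)=b(0)z+\widehat F(z)=\widehat F(z)\ge\sum_i\bigl(f_i(0)-z\cdot0\bigr)=0$, so $\beta w(0)-H(0,\varphi'(0))=-H(0,\varphi'(0))\le0$ for every test function $\varphi$.

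The hard part will be the subsolution inequality at $x=1$, where the boundary value alone is insufficient and the equation must be used. Given $\varphi\in C^1$ with a (without loss of generality strict) maximum of $w-\varphi$ at $x=1$ relative to $[0,1]$, I would penalize with $\tilde\varphi_\varepsilon(x)=\varphi(x)+\varepsilon/(1-x)$: since $\tilde\varphi_\varepsilon\to+\infty$ as $x\to1^-$, the function $w-\tilde\varphi_\varepsilon$ attains a maximum over a left neighbourhood of $1$ at an interior point $x_\varepsilon\to1$, with the standard estimate $\varepsilon/(1-x_\varepsilon)\to0$. Applying the interior subsolution property at $x_\varepsilon$ gives
$$\beta w(x_\varepsilon)\le b(x_\varepsilon)p_\varepsilon+\widehat F(p_\varepsilon),\qquad p_\varepsilon:=\varphi'(x_\varepsilon)+\frac{\varepsilon}{(1-x_\varepsilon)^2}\ge\varphi'(x_\varepsilon).$$
The key observation is that $b(1)=0$ controls the singular slope: Lipschitz continuity of $b$ with constant $K$ yields $0\le b(x_\varepsilon)\,\varepsilon/(1-x_\varepsilon)^2\le K\,\varepsilon/(1-x_\varepsilon)\to0$, so $b(x_\varepsilon)p_\varepsilon\to0$. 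Moreover $\widehat F$ is convex, finite, and \emph{non-increasing} (a supremum of affine functions with nonpositive slopes $-\alpha_i$), so $\widehat F(p_\varepsilon)\le\widehat F(\varphi'(x_\varepsilon))\to\widehat F(\varphi'(1))$. Passing to the limit gives $\beta w(1)\le\widehat F(\varphi'(1))=H(1,\varphi'(1))$, the required inequality. Thus $w$ is a subsolution on $[0,1]$ and a supersolution on $(0,1)$, i.e. a constrained viscosity solution, and Sontag's Theorem 2.1 forces $w=v$. I expect every step except this penalization limit at $x=1$ to be a direct reading of the definitions combined with the sign and monotonicity properties of $H$ and $\widehat F$.
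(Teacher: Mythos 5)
Your proposal is correct and follows the same skeleton as the paper's proof: verify $v(0)=0$, note that the supersolution property on $(0,1)$ and the subsolution property at $x=0$ are immediate (the latter from $b(0)=0$ and $\widehat F\ge 0$), upgrade the subsolution property to $x=1$, and then invoke the uniqueness of the constrained viscosity solution from \cite{Son86}. The one place where you genuinely diverge is the endpoint $x=1$: the paper disposes of it by observing that $x=1$ is an \emph{irrelevant} and \emph{regular} boundary point in the sense of \cite[Definitions 2 and 4]{CraNew85} (precisely because $b(1)=0$ and $z\mapsto\widehat F(z)$ is non-increasing) and then citing \cite[Theorem 2]{CraNew85}, whereas you prove the same fact by hand with the penalization $\varphi(x)+\varepsilon/(1-x)$, using exactly those two structural facts --- $b$ Lipschitz with $b(1)=0$ to kill the term $b(x_\varepsilon)\,\varepsilon/(1-x_\varepsilon)^2$, and monotonicity plus continuity of $\widehat F$ to pass to the limit in $\widehat F(p_\varepsilon)\le\widehat F(\varphi'(x_\varepsilon))$. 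Your argument is self-contained and checks out (the minor technicality that $\varphi(x)+\varepsilon/(1-x)$ is only locally $C^1$ near $x_\varepsilon$ is harmless, since the viscosity definition is local); what it buys is independence from the Crandall--Newcomb machinery, at the cost of a page of standard penalization estimates that the citation makes unnecessary. Your Gronwall argument for $v(0)=0$ is also more detailed than the paper's one-line appeal to the definition, but both are fine.
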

\begin{proof}
Since the equality $v(0)=0$ follows from the definition of $v$, we need only to prove that a continuous function $w$ with $w(0)=0$, satisfying the equation (\ref{2.7}) on $(0,1)$ in the viscosity sense, is uniquely defined. To do this we simply show that $w$ is a viscosity subsolution of (\ref{2.7}) on $[0,1]$ and refer to the cited result of \cite{Son86}.

The inequality
$$ 0=\beta w(0)\le H(0,\varphi'(0))=\widehat F(\varphi'(0))$$
is evident (for any $\varphi\in C^1(\mathbb R)$). Furthermore, in the terminology of \cite[Definitions 2 and 4]{CraNew85}, the point $x=1$ is \emph{irrelevant} and \emph{regular} for the left-hand side of the HJB equation. These properties follow from the fact that $z\mapsto\widehat F(z)$ is non-increasing and $b(1)=0$. By the result of \cite{CraNew85} (Theorem 2), $w$ automatically satisfies the equation (\ref{2.7}) in the viscosity sense on $(0,1]$.
\end{proof}

The subsequent study of the value function strongly relies on its characterization given in Lemma \ref{lem:3}.
Let
\begin{align*}
\partial w(x) &=\{\gamma\in\mathbb R: w(y)-w(x)\ge\gamma(y-x)\},\\
\partial^+ w(x) &=\{\gamma\in\mathbb R: w(y)-w(x)\le\gamma(y-x)\}
\end{align*}
be the sub- and superdifferential of a function $w$. Since $H(x,p)$ is convex in $p$  and satisfies the inequality
$$ |H(x,p)-H(y,p)|=|(b(x)-b(y))p|\le K|p| |x-y|,$$
by \cite[Chapter II, Theorem 5.6]{BarCap97} we infer that
\begin{equation} \label{2.8}
 \beta v(x)-H(x,\gamma)=0,\quad \gamma\in\partial^+ v(x),\quad x\in (0,1).
\end{equation}

As a concave function, $v$ is differentiable on a set $G\subset (0,1)$ with a countable complement $(0,1)\backslash G$. Moreover, $v'$ is continuous and non-increasing on $G$ (see \cite[Theorem 25.2]{Roc70}).
Thus,
\begin{equation} \label{2.9}
 \beta v(x)-H(x,v'(x))=0,\quad x\in G.
\end{equation}

Denote by $\delta_*^i$ the least maximum point  of $f_i$:
$$  \delta_*^i=\min\left(\arg\max_{u\in [0,\overline \alpha^i]} f_i(u)\right).$$
Let us call a strategy $\alpha$ \emph{static} if it does not depend on $t$.
\begin{assumption} \label{as:1}
The static strategy $\delta_*=(\delta^1_*,\dots,\delta^n_*)$ is not admissible for any $x\in [0,1]$. Equivalently, one can assume that $\tau^{x,\delta_*}<\infty$, or $\max_{x\in [0,1]}b(x)<\sum_{i=1}^n \delta_*^i$.
\end{assumption}
In what follows \emph{we suppose that the Assumption \ref{as:1} is satisfied} without further stipulation.

Denote by
$$ v'_+(x)=\lim_{y\downarrow x}\frac{v(y)-v(x)}{y-x},\quad v'_-(x)=\lim_{y\uparrow x}\frac{v(y)-v(x)}{y-x}$$
the right and left derivatives of $v$. It is well known that $\partial^+v(x)=[v'_+(x),v'_-(x)]$, $x\in (0,1)$ and the set-valued mapping $x\mapsto\partial^+ v(x)$ is non-increasing:
\begin{equation} \label{2.10}
\partial^+ v(x)\ge \partial^+ v(y),\quad x<y.
\end{equation}
For $A,B\subset\mathbb R$ we write $A\le B$ if $\xi\le\eta$ for all $\xi\in A$, $\eta\in B$.

\begin{lemma} \label{lem:4}
Assume that $f_i$ are concave. Then the function $v'$ is strictly decreasing on $G$, and $v$ is strictly concave and strictly increasing.
\end{lemma}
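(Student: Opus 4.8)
The plan is to prove that $v$ coincides with an affine function on no nondegenerate subinterval of $[0,1]$. By Lemma~\ref{lem:2} the function $v$ is concave and non-decreasing, and by \cite[Theorem 25.2]{Roc70} its derivative $v'$ is continuous and non-increasing on $G$; hence the absence of affine pieces is equivalent both to the strict decrease of $v'$ on $G$ and to the strict concavity of $v$, and combined with monotonicity it will also yield that $v$ is strictly increasing. Throughout I use the HJB identity (\ref{2.9}), $\beta v(x)=b(x)v'(x)+\widehat F(v'(x))$ for $x\in G$, the elementary bound $v\le v_{\max}:=\widehat F(0)/\beta$ coming from $F\le\max F=\widehat F(0)$, and the constants $B:=\max_{[0,1]}b$ and $q_*:=\sum_{i=1}^n\delta_*^i$, which satisfy $\rho:=q_*-B>0$ by Assumption~\ref{as:1}.

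So assume $v(x)=px+d$ on a nondegenerate interval $[a,b]\subset[0,1]$; then $p=v'\ge 0$ on $(a,b)\subset G$. If $p>0$, substituting $v(x)=px+d$, $v'(x)=p$ into (\ref{2.9}) and solving for $b$ gives $b(x)=\beta x+(\beta d-\widehat F(p))/p$ on $(a,b)$, so $b$ is affine on a nondegenerate interval, contradicting its strict concavity. Thus no affine piece of positive slope can occur.

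The remaining case $p=0$ is the crux. Here $v$ is constant on $[a,b]$, and (\ref{2.9}) (with $v'=0$ and $\widehat F(0)=\max F$ from (\ref{2.6})) forces this constant to equal $v_{\max}$; since $v$ is non-decreasing and $v\le v_{\max}$, it follows that $v\equiv v_{\max}$ on $[a,1]$. It therefore suffices to show the strict inequality $v(x)<v_{\max}$ for every $x$, and this is exactly where Assumption~\ref{as:1} must be used: the bound $v_{\max}$ would require the maximal instantaneous revenue $\max F$ to be earned at almost all (discounted) times, i.e.\ a harvesting rate $q_t\ge q_*$ for a.e.\ $t$, which the resource cannot sustain. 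Quantitatively, for any $q\in\mathscr A(x)$ the trajectory of (\ref{2.4}) stays in $[0,1]$, so integrating (\ref{2.4}) and using $b\le B$, $X_T^{x,q}\ge 0$ and $x\le 1$ gives $\int_0^T q_t\,dt\le 1+BT$ for every $T$. Put $T_1:=2/\rho$ and $\bar q:=\frac1{T_1}\int_0^{T_1}q_t\,dt$; then $\bar q\le B+1/T_1=q_*-\rho/2$.

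To convert this harvesting deficit into a revenue deficit I use the function $D:=\max F-F$, which is non-negative, convex (as $F$ is concave) and non-increasing on $[0,q_*]$, with $D>0$ on $[0,q_*)$. By Jensen's inequality on $[0,T_1]$ and the monotonicity of $D$,
$$\frac1{T_1}\int_0^{T_1}D(q_t)\,dt\ge D(\bar q)\ge D(q_*-\rho/2)=:c_0>0,$$
so that $\int_0^\infty e^{-\beta t}D(q_t)\,dt\ge e^{-\beta T_1}\int_0^{T_1}D(q_t)\,dt\ge e^{-\beta T_1}T_1c_0=:c>0$, with $c$ independent of $q$ and $x$. Since $J(x,q)=v_{\max}-\int_0^\infty e^{-\beta t}D(q_t)\,dt$, we get $J(x,q)\le v_{\max}-c$ for all admissible $q$, hence $v(x)\le v_{\max}-c<v_{\max}$, contradicting $v\equiv v_{\max}$ on $[a,1]$. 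This rules out the case $p=0$. With both cases excluded $v$ has no affine piece, so $v'$ is strictly decreasing on $G$ and $v$ is strictly concave; and being non-decreasing with no flat piece, $v$ is strictly increasing. The main obstacle is precisely this $p=0$ step: transferring the state-constraint bound on harvesting rates into a strictly positive, control-uniform loss of discounted revenue, for which the Jensen-plus-monotonicity estimate above is robust even when $F$ approaches its maximum with vanishing slope.
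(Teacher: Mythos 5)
Your proof is correct, and while its first half mirrors the paper's argument, the second half takes a genuinely different route. Both proofs reduce the statement to the absence of affine pieces of the concave function $v$, and both dispose of a hypothetical affine piece of positive slope via the HJB equation (\ref{2.9}) together with the strict concavity of $b$ (you solve the equation for $b$ and find it affine; the paper differentiates the equation to get $b'\equiv\beta$ on an interval --- the same contradiction). The divergence is in the zero-slope case, where (\ref{2.9}) forces $\beta v=\widehat F(0)=\max F$ on an interval. The paper invokes the existence of an optimal control $\alpha^*$ and argues by dichotomy: either some $f_i(\alpha^{i,*}_t)$ is submaximal on a set of positive measure, whence $v<\widehat F(0)/\beta$, or $\alpha^{i,*}_t\ge\delta^i_*$ a.e., which contradicts admissibility via the comparison Lemma \ref{lem:1} and Assumption \ref{as:1}. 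You instead prove the uniform bound $v\le\widehat F(0)/\beta-c$ with $c>0$ directly: the state constraint $X^{x,q}\ge 0$ caps the average harvest rate over $[0,T_1]$ strictly below $q_*=\sum_i\delta^i_*$, and Jensen's inequality applied to the convex, non-increasing deficit $D=\max F-F$ converts this into a control-independent loss of discounted revenue. The supporting facts you rely on all check out: $F$ is non-decreasing on $[0,q_*]$ because it is concave and maximized at $q_*$, and $F(q)=\max F$ forces $q\ge q_*$ because $\delta^i_*$ is the \emph{least} maximizer of $f_i$ and the supremum defining $F$ is attained. Your route buys independence from the existence theorem for optimal controls and from Lemma \ref{lem:1}, and yields a quantitative, $x$-uniform gap; the paper's version is shorter once those ingredients are granted.
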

\begin{proof}
To prove that $v$ is strictly concave it is enough to show that $x\mapsto \partial^+ v(x)$ is strictly decreasing:
$$ \partial^+ v(x)>\partial^+ v(y),\quad x<y$$
(see \cite[Chapter D, Proposition 6.1.3]{HirUrrLem01}). Assume that $\partial^+ v(x)\cap\partial^+ v(y)\neq\emptyset$, $x<y$. Then the interval $(x,y)$ contains some points $x_1<y_1$, $x_1,y_1\in G$ such that $v'(x_1)=v'(y_1)$. From (\ref{2.10}) it follows that $v'$ is differentiable on $(x_1,y_1)$ and equals to a constant. Differentiating the HJB equation (\ref{2.9}), we get
$$ \beta v'(x)=b'(x) v'(x),\quad x\in (x_1,y_1).$$
Since $b$ is strictly concave, the equality $b'(x)=\beta$, $x\in (x_1,y_1)$ is impossible. Thus, $v'(x)=0$, $x\in (x_1,y_1)$ and
$$\beta v(x)=\widehat F(0)=
\sum_{i=1}^n  f(\delta_*^i),\quad x\in (x_1,y_1).$$

An optimal solution $\alpha^*\in\mathscr A_n(x)$ of the problem (\ref{2.2}) exists (see, e.g., \cite[Theorem 1]{DmiKuz05}).
If $f_i(\alpha_t^{i,*})<f_i(\delta_*^i)=\max_{u\in [0,\overline q^i]}f_i(u)$ on a set of positive measure for at least one index $i$, then
$$ v(x)=J_n(x,\alpha^*)<\sum_{i=1}^n\int_0^\infty e^{-\beta t} f_i(\delta^i_*)\,dt=\frac{1}{\beta}\sum_{i=1}^n f_i(\delta^i_*).$$
If $f_i(\alpha_t^{i,*})=f_i(\delta^i_*)$ a.e., $i=1,\dots,n$, then $\alpha^{i,*}_t\ge \delta_*^i$ a.e. by the definition of $\delta_*$. But this is impossible since the strategy $\delta_*$ is not admissible for $x$ and a fortiori so is $\alpha^*$ (see Lemma \ref{lem:1}).

The obtained contradiction implies that $\partial^+ v$ is strictly decreasing. Hence, $v$ is strictly concave. In view of Lemma \ref{lem:2} this property implies that $v$ is strictly increasing.
\end{proof}

Denote by $g^*(x)=\sup_{y\in\mathbb R}(xy-g(y))$  the Young-Fenchel transform of a function $g:\mathbb R\mapsto (-\infty,\infty]$. Recall (see \cite[Proposition 11.3]{RockWets09}) that for a continuous convex function $g:[a,b]\mapsto\mathbb R$ we have
\begin{equation} \label{2.11}
 \partial g^*(x)=\arg\max_{y\in [a,b]}(xy-g(y)).
\end{equation}

The next result establishes a connection between the differentiability of the value function and the optimality of static strategies.
\begin{lemma} \label{lem:5}
Let $f_i$ be concave. If the value function $v$ is not differentiable at $x_0\in (0,1)$, then the static strategy $q_t=b(x_0)\in\mathscr A(x_0)$ is optimal, and $x_0$ is uniquely defined by the ``golden rule": $b'(x_0)=\beta$.
\end{lemma}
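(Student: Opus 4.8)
The plan is to exploit relation (\ref{2.8}), which holds for every element of the superdifferential $\partial^+ v(x_0)$, together with the convexity of $\widehat F$. Since $v$ is concave (Lemma \ref{lem:4}) and, by hypothesis, not differentiable at $x_0$, the superdifferential is a nondegenerate interval $\partial^+ v(x_0) = [\gamma_+,\gamma_-]$ with $\gamma_+ = v'_+(x_0) < v'_-(x_0) = \gamma_-$. Feeding each $\gamma \in [\gamma_+,\gamma_-]$ into (\ref{2.8}) gives $\beta v(x_0) = H(x_0,\gamma) = b(x_0)\gamma + \widehat F(\gamma)$, so the affine-in-$\gamma$ map $b(x_0)\gamma + \widehat F(\gamma)$ is constant on this interval. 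Hence $\widehat F(\gamma) = \beta v(x_0) - b(x_0)\gamma$ there, i.e. $\widehat F$ is affine with slope $-b(x_0)$ on $[\gamma_+,\gamma_-]$.

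Next I would identify the optimal instantaneous control. Writing $g=-F$ (convex), one has $\widehat F(\gamma) = g^*(-\gamma)$, so for an interior $\gamma_0\in(\gamma_+,\gamma_-)$, where $\widehat F$ is differentiable with $\widehat F'(\gamma_0) = -b(x_0)$, formula (\ref{2.11}) yields $\arg\max_{q\in[0,\overline q]}(F(q)-\gamma_0 q) = \{b(x_0)\}$. In particular $\widehat F(\gamma_0) = F(b(x_0)) - b(x_0)\gamma_0$, whence $\beta v(x_0) = b(x_0)\gamma_0 + \widehat F(\gamma_0) = F(b(x_0))$. Now the static control $q_t\equiv b(x_0)$ keeps the state frozen at $x_0$, since its drift is $b(x_0)-b(x_0)=0$; it is admissible because $b(x_0)\le\max b<\overline q$ by Assumption \ref{as:1}; and it produces $J(x_0,q) = \int_0^\infty e^{-\beta t}F(b(x_0))\,dt = F(b(x_0))/\beta = v(x_0)$. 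Thus $q_t\equiv b(x_0)\in\mathscr A(x_0)$ is optimal.

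For the golden rule I would run a one-sided estimate on each side of $x_0$, using the HJB equation (\ref{2.9}) on $G$ and the continuity and monotonicity of $v'$ there, so that $v'(x)\to\gamma_-$ as $x\uparrow x_0$ and $v'(x)\to\gamma_+$ as $x\downarrow x_0$. Fix $x<x_0$ in $G$ and set $\delta = v'(x)-\gamma_-\ge 0$. Subtracting (\ref{2.9}) at $x$ from $\beta v(x_0)=b(x_0)\gamma_-+\widehat F(\gamma_-)$, bounding $\widehat F(\gamma_-+\delta)\ge\widehat F(\gamma_-)+\widehat F'_+(\gamma_-)\delta$ by convexity, and using $v(x_0)-v(x)\ge\gamma_-(x_0-x)$, I get
$$\beta\gamma_-(x_0-x)\le\gamma_-\bigl(b(x_0)-b(x)\bigr)-\delta\bigl[b(x)+\widehat F'_+(\gamma_-)\bigr].$$
Because $\widehat F$ has slope $-b(x_0)$ just to the left of $\gamma_-$, convexity gives $\widehat F'_+(\gamma_-)\ge -b(x_0)$, so the bracket is nonnegative near $x_0$ and the last term may be dropped; dividing by $x_0-x>0$, letting $x\uparrow x_0$ and using $\gamma_->0$ (Lemma \ref{lem:4}) yields $b'(x_0)\ge\beta$. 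The symmetric computation for $x>x_0$, now with $\widehat F'_-(\gamma_+)\le -b(x_0)$ and $\gamma_+>0$, gives $b'(x_0)\le\beta$. Hence $b'(x_0)=\beta$. Uniqueness is immediate: $b$ is strictly concave, so $b'$ is strictly decreasing and the equation $b'(x_0)=\beta$ has at most one root.

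I expect the golden-rule step to be the main obstacle: the finite-difference quotients carry the ``curvature'' term $\delta/(x_0-x)$, which is not controlled a priori, and the point is precisely that it enters with a sign made favorable by the convexity of $\widehat F$ at the endpoints $\gamma_\pm$ (equivalently, by the fact that $b(x_0)$ remains the instantaneous maximizer as $\gamma$ crosses $\gamma_\pm$). This is the value-function counterpart of the Pontryagin steady-state condition $\dot p = (\beta-b'(x_0))p = 0$ with $p\neq 0$.
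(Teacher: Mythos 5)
Your proof of the first assertion (optimality of the static strategy $q_t\equiv b(x_0)$) coincides with the paper's: both feed every $\gamma\in\partial^+v(x_0)$ into (\ref{2.8}), deduce that $\widehat F$ is affine with slope $-b(x_0)$ on $[v'_+(x_0),v'_-(x_0)]$, and use (\ref{2.11}), (\ref{2.13}) to identify $b(x_0)$ as the unique maximizer of $q\mapsto F(q)-\gamma q$, whence $\beta v(x_0)=F(b(x_0))$. For the golden rule, however, you take a genuinely different route. The paper applies the infinite-horizon Pontryagin maximum principle to the stationary pair $(x_0,b(x_0))$: if $b'(x_0)\neq\beta$ the adjoint variable $z_t=Ae^{(\beta-b'(x_0))t}$ either blows up, making $H(x_0,z_t)\to\infty$, or tends to $0$, and the latter case is excluded by Assumption \ref{as:1}. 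You instead differentiate the HJB equation ``by hand'' via one-sided difference quotients, using $v'(x)\to v'_\mp(x_0)$ along $G$, the support inequality $v(x_0)-v(x)\ge\gamma_-(x_0-x)$, the convexity bound on $\widehat F$ at $\gamma_\pm$, and $\gamma_\pm>0$ from Lemma \ref{lem:4}. This is more elementary and self-contained (no maximum-principle machinery for infinite horizon), but note that it proves only the implication literally stated in the lemma, whereas the paper's argument establishes the stronger fact that \emph{any} optimal static strategy forces $b'(x_0)=\beta$ --- which is exactly what Theorem \ref{th:2} later cites as ``necessity''. If your route were adopted, that step would need a separate justification.

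One small repair is needed: the claim that the bracket $b(x)+\widehat F'_+(\gamma_-)$ is nonnegative near $x_0$ is not justified. Convexity only gives $\widehat F'_+(\gamma_-)\ge -b(x_0)$, so the bracket is bounded below by $b(x)-b(x_0)$, which is negative for $x<x_0$ close to $x_0$ whenever $b$ is increasing there --- precisely the regime you are trying to establish. The term cannot simply be dropped, but it is harmless: since $\delta\ge 0$,
$$-\delta\bigl[b(x)+\widehat F'_+(\gamma_-)\bigr]\le\delta\bigl(b(x_0)-b(x)\bigr),$$
so the right-hand side of your displayed inequality is at most $(\gamma_-+\delta)\bigl(b(x_0)-b(x)\bigr)=v'(x)\bigl(b(x_0)-b(x)\bigr)$; dividing by $x_0-x>0$ and letting $x\uparrow x_0$ along $G$ gives $\beta\gamma_-\le\gamma_- b'(x_0)$ because $\delta\to 0$, and the mirror estimate to the right of $x_0$ is repaired the same way. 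With this correction the argument is complete.
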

\begin{proof}
Assume that $v'_-(x_0)>v'_+(x_0)$, $x_0\in(0,1)$. By (\ref{2.8}) we have
\begin{equation} \label{2.12}
 \beta v(x_0)=b(x_0)\gamma+\widehat F(\gamma),\quad \gamma\in (v'_+(x_0),v'_-(x_0)).
\end{equation}
Since
\begin{equation} \label{2.13}
\widehat F(z)=\sup_q\{-zq-(-F(q)\}=(-F)^*(-z),
\end{equation}
by (\ref{2.11}), (\ref{2.12}) we obtain
\begin{equation} \label{2.14}
\{\widehat F'(\gamma)\}=\{-b(x_0)\}=-\arg\max_{q\in [0,\overline q]} (F(q)-\gamma q),\quad \gamma\in (v'_+(x_0),v'_-(x_0)).
\end{equation}
Hence, $\widehat F(\gamma)=F(b(x_0))-b(x_0)\gamma$, $\gamma\in (v'_+(x_0),v'_-(x_0))$ and $b(x_0)\in\mathscr A(x_0)$ is optimal:
$$ \beta v(x_0)=F(b(x_0))=\beta J(x_0,b(x_0)).$$

Now assume that the static strategy $b(x_0)$ is optimal. Let us apply the relations Pontryagin's maximum principle to the stationary solution $(X_t,q_t)=(x_0,b(x_0))$ of (\ref{2.4}). Consider the adjoint equation
\begin{equation} \label{2.15}
\dot\psi(t)=-b'(x_0)\psi(t)
\end{equation}
and the basic relation of the Pontryagin maximum principle:
\begin{equation} \label{2.16}
\psi^0 e^{-\beta t} F(b(x_0))=\max_{q\in [0,\overline q]}\left( \psi^0 e^{-\beta t} F(q)+(b(x_0)-q)\psi(t)\right).
\end{equation}
We have $\psi(t)=Ae^{-b'(x_0)t}$ for some $A\in\mathbb R$. If  $(x_0,b(x_0))$ is an optimal solution, then there exist $\psi^0\in\mathbb R_+$, $A\in\mathbb R$ such that $(\psi^0,A)\neq 0$ and the relations (\ref{2.15}), (\ref{2.16}) hold true: see \cite[Theorem 1]{AseKry08}.

Let us rewrite (\ref{2.15}), (\ref{2.16}) as follows
$$ \psi^0  F(b(x_0))=\max_{q\in [0,\overline q]}\left(\psi^0 F(q)+A (b(x_0)-q) e^{(\beta-b'(x_0))t}\right).$$
Assume that $b'(x_0)\neq\beta$. If $\psi^0=0$, then we get a contradiction since $b(x_0)-q$ changes sign on $[0,\overline q]$. Thus, we may assume that $\psi^0=1$:
\begin{align} \label{2.17}
 F(b(x_0)) &=A b(x_0) e^{(\beta-b'(x_0))t}+\max_{q\in [0,\overline q]}\left(F(q) -A e^{(\beta-b'(x_0))t} q\right)\nonumber\\
           &=H(x_0,z_t),\quad z_t=A e^{(\beta-b'(x_0))t}.
\end{align}
But the equality (\ref{2.17}) is impossible, since either $|z_t|\to\infty$ and $H(x_0,z_t)\to+\infty$, $t\to\infty$, or $|z_t|\to 0$ and
$$ H(x_0,z_t)\to H(x_0,0)=\widehat F(0)=\sum_{i=1}^n f_i(\delta^i_*),\quad t\to\infty.$$
In the latter case by (\ref{2.3}) and (\ref{2.17}) we have
$$ F(b(x_0))=\sum_{i=1}^n f_i(\nu_i)=\sum_{i=1}^n f_i(\delta^i_*)$$
for some $\nu_i\in [0,\overline\alpha^i]$ with $\nu_1+\dots+\nu_n=b(x_0)$. From the definition of $\delta^i_*$ it then follows that $\nu_i\ge\delta^i_*$, $i=1,\dots,n$. This is a contradiction, since $\sum_{i=1}^n \delta^i_*\not\in\mathscr A(x_0)$, and  $\sum_{i=1}^n \nu^i=b(x_0)$ should retain this property.
\end{proof}

From the properties of $b$ it follows that either $b'(x)<\beta$, $x\in (0,1)$, or the equation
\begin{equation} \label{2.18}
 b'(x)=\beta,\quad x\in (0,1)
\end{equation}
has a unique solution $\widehat x\in (0,1)$.
\begin{theorem} \label{th:1}
Suppose that $f_i$ are concave. Then the value function $v$ is strictly increasing, strictly concave and continuously differentiable on $(0,1)$, except maybe the point $\widehat x$. If $F$ is differentiable at $b(\widehat x)$, then $v$ is continuously differentiable.
\end{theorem}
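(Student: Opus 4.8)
The plan is to read off most of the statement from the lemmas already established, so that the only genuinely new work concerns the golden-rule point $\widehat x$ and the final implication. Strict monotonicity and strict concavity of $v$ are precisely the content of Lemma \ref{lem:4}, so I would invoke it at once and devote the argument to the differentiability claims.

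For differentiability on $(0,1)$ away from $\widehat x$, I would argue by contraposition via Lemma \ref{lem:5}: if $v$ fails to be differentiable at some $x_0\in(0,1)$, then that lemma forces $b'(x_0)=\beta$. By the dichotomy recorded just before the theorem (see (\ref{2.18})), either $b'(x)<\beta$ throughout $(0,1)$, in which case no such $x_0$ exists and $v$ is differentiable on all of $(0,1)$, or $b'(x)=\beta$ has the single root $\widehat x$, so that $x_0=\widehat x$ is the only possible location of a kink. Combined with the already-recalled fact (see \cite[Theorem 25.2]{Roc70}) that the derivative of a concave function is continuous on its set of differentiability $G$, and with $G\supseteq(0,1)\setminus\{\widehat x\}$ being open, this yields $v\in C^1((0,1)\setminus\{\widehat x\})$ directly.

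It remains to upgrade this to full $C^1$ regularity under the hypothesis that $F$ is differentiable at $b(\widehat x)$, and this is the step I expect to be the crux. I would suppose, for contradiction, that $v$ is not differentiable at $\widehat x$, so that $v'_+(\widehat x)<v'_-(\widehat x)$ and $\partial^+ v(\widehat x)=[v'_+(\widehat x),v'_-(\widehat x)]$ is a nondegenerate interval. Inserting every $\gamma$ from this interval into (\ref{2.8}) gives $\widehat F(\gamma)=\beta v(\widehat x)-b(\widehat x)\gamma$, so $\widehat F$ is affine with slope $-b(\widehat x)$ on $(v'_+(\widehat x),v'_-(\widehat x))$. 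Exactly as in the derivation of (\ref{2.14}) inside the proof of Lemma \ref{lem:5}, the identity (\ref{2.13}) together with (\ref{2.11}) then forces $\arg\max_{q\in[0,\overline q]}(F(q)-\gamma q)=\{b(\widehat x)\}$ for every such $\gamma$, which for the concave $F$ is equivalent to $\gamma\in\partial^+ F(b(\widehat x))$. Hence the nonempty open interval $(v'_+(\widehat x),v'_-(\widehat x))$ is contained in the superdifferential $\partial^+ F(b(\widehat x))$.

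The contradiction is then immediate: differentiability of $F$ at $b(\widehat x)$ means $\partial^+ F(b(\widehat x))$ is a single point, which cannot contain a nondegenerate interval. Therefore $v$ is differentiable at $\widehat x$ as well, whence $G=(0,1)$ and continuity of $v'$ on $G$ promotes this to $v\in C^1(0,1)$. The main obstacle is the middle step, namely converting the flatness of $\widehat F$ on $\partial^+ v(\widehat x)$ into a uniqueness statement for the maximizer of $F(q)-\gamma q$ and then reading that as membership $\gamma\in\partial^+ F(b(\widehat x))$; the rest is bookkeeping over Lemmas \ref{lem:4} and \ref{lem:5} and the elementary regularity of concave functions.
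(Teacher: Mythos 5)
Your proposal is correct and follows essentially the same route as the paper: Lemma \ref{lem:5} localizes any possible kink to the golden-rule point $\widehat x$, and the derivation of (\ref{2.14}) (via (\ref{2.8}), (\ref{2.13}) and (\ref{2.11})) shows the nondegenerate interval $(v'_+(\widehat x),v'_-(\widehat x))$ would sit inside $\partial^+ F(b(\widehat x))$, contradicting differentiability of $F$ there. The only cosmetic difference is that you re-derive (\ref{2.14}) rather than citing it, and you spell out the appeal to Lemma \ref{lem:4} and to the continuity of $v'$ on $G$, which the paper leaves implicit.
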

\begin{proof}
From Lemma \ref{lem:5} it follows that $\widehat x$ is the only possible discontinuity point of $v$. If $v$  is not differentiable at $\widehat x$, then the interval $(v'_+(\widehat x),v'_-(\widehat x))$ is non-empty. But if $F$ is differentiable at $b(\widehat x)$, then (\ref{2.14}) gives a contradiction: $F'(b(\widehat x))=\gamma$ for all $\gamma\in (v'_+(x_0),v'_-(x_0)).$
\end{proof}

Note that the assumption, concerning the existence of $F'(b(\widehat x))$ is not restrictive. Firstly, $F'$ can have only countably many discontinuity points. Thus, $\widehat x$ is not one of these points for all $\beta\in D$, where $(0,\infty)\backslash D$ is countable. Secondly, the formula
\begin{equation} \label{2.18A}
\partial^+ F(q)=\bigcap_{i=1}^n\partial^+f_i(\alpha^i),\quad\sum_{i=1}^n\alpha^i=q,\quad \sum_{i=1}^n f_i(\alpha^i)=F(q)
\end{equation}
(see \cite[Chapter D, Corollary 4.5.5]{HirUrrLem01}) shows that $F'(b(\widehat x))$ exists if any of the functions $f_i$ is differentiable at $\alpha^i$, satisfying (\ref{2.18A}).

The next result shows that the static strategy $q=b(\widehat x)$ is indeed optimal.
\begin{theorem} \label{th:2}
Assume that $f_i$ are concave. A static strategy $b(x_0)\in\mathscr A(x_0)$, $x_0\in (0,1)$ is optimal if and only if $x_0$ coincides with the solution $\widehat x$ of (\ref{2.18}).
\end{theorem}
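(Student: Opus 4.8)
The plan is to prove the two implications separately, the necessity being essentially free and the sufficiency requiring the real work. For \emph{necessity} ($b(x_0)$ optimal $\Rightarrow x_0=\widehat x$) I would simply invoke the second part of the proof of Lemma \ref{lem:5}: applying Pontryagin's maximum principle to the stationary pair $(x_0,b(x_0))$ and assuming $b'(x_0)\neq\beta$ already produces a contradiction there, so an optimal static strategy forces the golden rule $b'(x_0)=\beta$, i.e. $x_0=\widehat x$. For \emph{sufficiency} ($x_0=\widehat x\Rightarrow b(\widehat x)$ optimal) I first reduce the claim to a pointwise condition. Since $q\equiv b(\widehat x)$ keeps the trajectory at $\widehat x$, its payoff is $F(b(\widehat x))/\beta$, and as $v(\widehat x)\ge J(\widehat x,b(\widehat x))$ holds trivially, optimality is equivalent to $\beta v(\widehat x)=F(b(\widehat x))$. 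Evaluating (\ref{2.8}) at $\widehat x$ gives $\beta v(\widehat x)=b(\widehat x)\gamma+\widehat F(\gamma)$ for every $\gamma\in\partial^+v(\widehat x)$; since $\widehat F(\gamma)\ge F(b(\widehat x))-b(\widehat x)\gamma$ always, the target equality reduces to the Hamiltonian maximization condition $b(\widehat x)\in\arg\max_{q\in[0,\overline q]}(F(q)-\gamma q)$ for some $\gamma\in\partial^+v(\widehat x)$, equivalently $\partial^+v(\widehat x)\cap\partial^+F(b(\widehat x))\neq\emptyset$.

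If $v$ is \emph{not} differentiable at $\widehat x$, this condition is immediate: relation (\ref{2.14}) in Lemma \ref{lem:5} already identifies $b(\widehat x)$ as the unique maximizer for every interior $\gamma$, so sufficiency holds at once. The remaining, and main, case is when $v$ is differentiable at $\widehat x$, in which case Theorem \ref{th:1} gives $v\in C^1(0,1)$ and the HJB equation holds classically, $\beta v(x)=b(x)v'(x)+\widehat F(v'(x))$. Here I would differentiate this identity. Using that $v'$ is monotone (hence differentiable a.e.) and that $\widehat F$ is convex (hence differentiable off a countable set, whose $v'$-preimage is null because $v'$ is strictly monotone), I obtain for a.e. $x$
$$(\beta-b'(x))\,v'(x)=(b(x)-q^*(x))\,v''(x),\qquad q^*(x)=-\widehat F{}'(v'(x))=\arg\max_{q}(F(q)-v'(x)q).$$
Because $v'(x)>0$ by Lemma \ref{lem:4} and $b'(x)\neq\beta$ for $x\neq\widehat x$, the left side is nonzero, which forces $v''(x)<0$ a.e. (so strict concavity is upgraded by the equation itself).

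The sign of the closed-loop drift $d(x):=b(x)-q^*(x)$ is now read off directly: since $b$ is strictly concave with $b'(\widehat x)=\beta$, one has $\beta-b'(x)<0$ for $x<\widehat x$ and $>0$ for $x>\widehat x$, whence $d(x)>0$ on $(0,\widehat x)$ and $d(x)<0$ on $(\widehat x,1)$ a.e.; that is, the optimal feedback always pushes the state toward $\widehat x$. Finally I would let $x\to\widehat x$ and exploit that $q^*(\cdot)$, being anti-monotone in the strictly decreasing $v'(x)$, has one-sided limits: from $q^*(x)<b(x)$ as $x\uparrow\widehat x$ and $q^*(x)>b(x)$ as $x\downarrow\widehat x$ I get $\min\arg\max_q(F(q)-v'(\widehat x)q)\le b(\widehat x)\le\max\arg\max_q(F(q)-v'(\widehat x)q)$, so $b(\widehat x)$ lies in the argmax and the Hamiltonian maximization condition holds with $\gamma=v'(\widehat x)$. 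This gives $\beta v(\widehat x)=F(b(\widehat x))$ and hence optimality.

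The main obstacle is exactly this differentiable case. The delicate points are the a.e. differentiation of the HJB equation (which requires the measure-theoretic bookkeeping on where $v''$ and $\widehat F{}'$ simultaneously exist) and the passage to the limit at $\widehat x$ while controlling the generally set-valued maximizer $\arg\max_q(F(q)-v'(x)q)$. It is the monotonicity of both $v'$ and the argmax correspondence that makes the one-sided limits exist and pinch $b(\widehat x)$ inside the argmax; absent that structure the limit argument would not close.
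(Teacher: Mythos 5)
Your proposal is correct and follows essentially the same route as the paper: necessity via the Pontryagin argument of Lemma \ref{lem:5}, the non-differentiable case via (\ref{2.14}), and the differentiable case by differentiating the HJB equation a.e., reading off the sign of $b(x)+\widehat F'(v'(x))$ on either side of $\widehat x$, and pinching $b(\widehat x)$ into $-\partial\widehat F(v'(\widehat x))=\arg\max_q\{F(q)-v'(\widehat x)q\}$ by the one-sided limits of the monotone quantities. The measure-theoretic bookkeeping you flag (co-countability of $(v')^{-1}(U)$ by strict monotonicity of $v'$, $v''\le 0$ a.e.) is exactly what the paper does.
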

\begin{proof} The necessity is proved in Lemma \ref{lem:5}. It remains to prove that $b(\widehat x)\in\mathscr A(\widehat x)$ is optimal. If $v$ is not differentiable at $\widehat x$, the result follows from Lemma \ref{lem:4}. Assume that $v$ is continuously differentiable.

The convex function $\widehat F$ is continuously differentiable on a co-countable set $U\subset\mathbb R$. Furthermore, $v$ is twice differentiable a.e., and $v''\le 0$ a.e., since $v'$ is decreasing. Hence, $\widehat F(v'(x))$ is differentiable on the co-countable set $(v')^{-1}(U)=\{x\in (0,1):v'(x)\in U\}$.
Differentiating the HJB equation (\ref{2.9}),
by the chain rule we obtain
$$ (\beta-b'(x))v'(x)=v''(x) \left(b(x)+\widehat F'(v'(x))\right)\quad a.e.$$
The inequalities
$$\beta-b'(x)<0,\quad x\in (0,\widehat x);\quad \beta-b'(x)>0,\quad x\in (\widehat x,1)$$
imply that $v''(x)<0$ a.e. and
\begin{equation} \label{2.19}
 b(x)+\widehat F'(v'(x))>0,\quad \textrm{a.e. on}\ (0,\widehat x),\quad
 b(x)+\widehat F'(v'(x))<0,\quad \textrm{a.e. on}\ (\widehat x,1).
\end{equation}
Since $v'$ is continuous and strictly decreasing we get the inequalities
$$  b(\widehat x)+\widehat F'_+(v'(\widehat x))\ge 0\ge b(\widehat x)+\widehat F'_-(v'(\widehat x)).$$
Using (\ref{2.11}), (\ref{2.13}), we obtain
\begin{equation} \label{2.20}
 b(\widehat x)\in -\partial \widehat F(v'(\widehat x))=\arg\max_{q\in [0,\overline q]}\{F(q)-v'(\widehat x) q\}.
\end{equation}
It follows that the static strategy $q_t=b(\widehat x)\in\mathscr A(\widehat x)$ is optimal:
$$ \beta v(\widehat x)=b(\widehat x) v'(\widehat x)+\widehat F(v'(\widehat x))=F(b(\widehat x)),\quad v(\widehat x)=J(\widehat x,b(\widehat x)).\qedhere$$
\end{proof}

We turn to the analysis of optimal strategies $q\in\mathscr A(x)$ for $x\neq\widehat x$.
Put
\begin{equation} \label{2.21}
\widehat q(z)=-\partial\widehat F(z).
\end{equation}
On the co-countable set $U$, where $\widehat F$ is differentiable, the mapping (\ref{2.21}) is single-valued.
By (\ref{2.20}) we have
$$ \widehat q(v'(x))=\arg\max_{q\in [0,\overline q]} (F(q)-q v'(x)),\quad v'(x)\in U.$$
Note, that $H_z(x,z)=b(x)-\widehat q(z)$, $z\in U$. From (\ref{2.19}) we know that
$$ H_z(x,v'(x))>0,\quad \textrm{a.e. on}\ (0,\widehat x),\qquad
   H_z(x,v'(x))<0,\quad \textrm{a.e. on}\ (\widehat x,1).$$

We want to use $\widehat q(v'(x))$ as a \emph{feedback control}, formally considering the equation
$$ \dot X=b(X)-\widehat q(v'(X))=H_z(X,v'(X)),\quad  X_0=x.$$
To do it in a rigorous way let us first introduce
$$ \tau^x=\int_x^{\widehat x} \frac{du}{H_z(u,v'(u))}.$$
This definition allows $\tau^x$ to be infinite. Let $x<\widehat x$ (resp., $x>\widehat x$). Then the mapping
$$ \Psi(y)=\int_x^y \frac{du}{H_z(u,v'(u))},\quad \Psi:(x,\widehat x)\mapsto (0,\tau^x)\quad (\textrm{resp.}, \Psi:(\widehat x,x)\mapsto (0,\tau^x))$$
is a bijection.

\begin{lemma} \label{lem:6}
Let $\psi:[a,b]\mapsto\mathbb R$ be continuous and strictly monotonic. Then $\psi^{-1}$ is absolutely continuous if and only if $\psi'\neq 0$ a.e. on $(a,b)$.
\end{lemma}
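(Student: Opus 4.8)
The plan is to reduce everything to the action of the inverse on null sets. Since the assertion is unchanged under $\psi\mapsto-\psi$, I may assume $\psi$ is strictly increasing; then $\varphi:=\psi^{-1}$ is a continuous strictly increasing bijection of $[\psi(a),\psi(b)]$ onto $[a,b]$. Being monotone, $\varphi$ is continuous and of bounded variation, so by the Banach--Zarecki theorem its absolute continuity is \emph{equivalent} to the Luzin condition (N): $\varphi$ carries Lebesgue-null sets to null sets. The entire proof then amounts to relating condition (N) for $\varphi$ to the size of the set $E_0=\{x\in(a,b):\psi'(x)=0\}$.

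The two technical ingredients I would invoke are the classical measure--derivative estimates (proved by a Vitali covering argument): for a function $g$ and a set $S$, if the upper derivate of $g$ is $\le M$ at every point of $S$ then $|g(S)|^{*}\le M\,|S|^{*}$, while if the lower derivate is $\ge m>0$ on $S$ then $|g(S)|^{*}\ge m\,|S|^{*}$, where $|\cdot|^{*}$ denotes outer measure. For the monotone function $\psi$ these derivates coincide with $\psi'$ at every point where $\psi'$ exists. These two one-line inequalities are the only genuinely quantitative facts needed, and the whole argument is a matter of applying them to the right sets.

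For the implication ``$\varphi$ absolutely continuous $\Rightarrow$ $\psi'\ne 0$ a.e.'' I argue by contraposition. Applying the first estimate with $g=\psi$ on $E_0$ and letting $M\downarrow 0$ gives $|\psi(E_0)|=0$. Thus $\psi(E_0)$ is a null set whose $\varphi$-image is precisely $E_0$; if $|E_0|>0$, then condition (N) fails for $\varphi$, and $\varphi$ is not absolutely continuous. Hence absolute continuity of $\varphi$ forces $|E_0|=0$, i.e. $\psi'\ne 0$ a.e.

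For the converse I assume $|E_0|=0$ and verify (N) directly. Since a monotone function is a.e. differentiable with a finite derivative, the exceptional set where $\psi'$ fails to exist, equals $0$, or equals $+\infty$ is null; together with the hypothesis $|E_0|=0$ this confines almost every point to $\{0<\psi'<\infty\}$. Given any null set $Z\subset[\psi(a),\psi(b)]$, put $A=\varphi(Z)$, so that $\psi(A)=Z$, and set $A_\delta=\{x\in A:\psi'(x)\ge\delta\}$. The second estimate yields $\delta\,|A_\delta|^{*}\le|\psi(A_\delta)|^{*}\le|Z|=0$; letting $\delta=1/k\downarrow 0$ and discarding the null exceptional set gives $|A|=0$, so $\varphi$ satisfies (N) and is therefore absolutely continuous. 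The only place demanding care is this last bookkeeping step, namely organizing the a.e.\ decomposition so that all pathological behaviour of $\psi$ ($\psi'$ infinite or nonexistent) is swept into a null set and $A$ lies, up to measure zero, inside $\{0<\psi'<\infty\}$.
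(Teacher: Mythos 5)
Your proof is correct and complete: the reduction via Banach--Zarecki to Luzin's condition (N) for $\varphi=\psi^{-1}$, the upper-derivate estimate applied to $E_0=\{\psi'=0\}$ for one direction, and the lower-derivate estimate (valid here because $\psi$ is strictly monotone, hence injective) applied to $A_\delta$ for the converse all go through, with the a.e.\ differentiability of monotone functions absorbing the exceptional set. The paper itself gives no proof of Lemma~\ref{lem:6} --- it only cites Theorem~2 of \cite{Vill84} --- and your argument is precisely the standard proof along the lines of that reference, so it fills the omitted step correctly.
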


By Lemma \ref{lem:6}, which proof can be found in \cite{Vill84} (Theorem 2), the equation
\begin{equation} \label{2.22}
 t=\int_x^{Y_t} \frac{du}{H_z(u,v'(u))}
\end{equation}
uniquely defines a locally absolutely continuous function $Y_t$, $t\in (0,\tau^x)$. Moreover, $Y$ is strictly increasing if $x<\widehat x$ and strictly decreasing if $x>\widehat x$. From (\ref{2.22}) we get
\begin{equation} \label{2.23}
\dot Y_t=H_z(Y_t,v'(Y_t))=b(Y_t)-\widehat q(v'(Y_t))\quad \textrm{a.e. on}\  (0,\tau^x),\quad Y_0=x.
\end{equation}
\begin{theorem} \label{th:3}
Let $f_i$ be concave and $x\neq\widehat x$. Put $\mathscr T=\{t\in (0,\tau^x):v'(Y_t)\in U\}$, where $Y$ is defined by (\ref{2.22}). Define the strategy
$$q^*_t=\widehat q(v'(Y_t)),\quad t\in\mathscr T.$$
On the countable set $(0,\tau^x)\backslash\mathscr T$ the values $q^*_t$ can be defined in an arbitrary way. If $\tau^x$ is finite put
$$ q^*_t=b(\widehat x),\quad t\ge\tau^x.$$

The strategy $q^*\in\mathscr A(x)$ is optimal.
\end{theorem}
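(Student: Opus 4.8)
\section*{Proof proposal}

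The plan is a verification argument anchored on the HJB characterization (\ref{2.9}) of $v$. First I would identify the trajectory produced by the open-loop control $q^*$. I claim $X^{x,q^*}$ coincides with $Y$ on $(0,\tau^x)$. Indeed, for every $t\in\mathscr T$ one has $q^*_t=\widehat q(v'(Y_t))$, so (\ref{2.23}) reads $\dot Y_t=b(Y_t)-q^*_t$; since $(0,\tau^x)\setminus\mathscr T$ is countable, $Y$ solves $\dot X=b(X)-q^*_t$, $X_0=x$, for a.e.\ $t$, and the uniqueness of solutions (Lipschitz $b$) forces $X^{x,q^*}_t=Y_t$. When $\tau^x<\infty$ we have $Y_t\to\widehat x$ as $t\uparrow\tau^x$, and for $t\ge\tau^x$ the constant control $b(\widehat x)$ holds the trajectory at the rest point $\widehat x$. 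As $Y$ is monotone between $x$ and $\widehat x$, both interior to $(0,1)$, the whole trajectory stays in a compact subinterval of $(0,1)$ and never reaches $0$; hence $q^*\in\mathscr A(x)$.

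Next I would derive a pointwise identity. By Theorem \ref{th:1}, $v$ is $C^1$ on $(0,\widehat x)$ and on $(\widehat x,1)$, and since $Y_t\neq\widehat x$ for $t<\tau^x$, the map $t\mapsto v(Y_t)$ is locally absolutely continuous with $\frac{d}{dt}v(Y_t)=v'(Y_t)\dot Y_t$ a.e. For $t\in\mathscr T$ the HJB equation $\beta v(Y_t)=b(Y_t)v'(Y_t)+\widehat F(v'(Y_t))$ holds and $q^*_t$ attains the supremum defining $\widehat F$, i.e.\ $\widehat F(v'(Y_t))=F(q^*_t)-q^*_t v'(Y_t)$. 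Inserting $\dot Y_t=b(Y_t)-q^*_t$ together with these two relations, all cross terms cancel:
\begin{equation*}
\frac{d}{dt}\left(e^{-\beta t}v(Y_t)\right)=e^{-\beta t}\left(-\beta v(Y_t)+v'(Y_t)\bigl(b(Y_t)-q^*_t\bigr)\right)=-e^{-\beta t}F(q^*_t)\quad\text{a.e. on }(0,\tau^x).
\end{equation*}

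Finally I would integrate this identity. If $\tau^x=\infty$, integrating over $(0,T)$ gives $v(x)=\int_0^T e^{-\beta t}F(q^*_t)\,dt+e^{-\beta T}v(Y_T)$, and as $T\to\infty$ the boundary term vanishes since $v$ is bounded on $[0,1]$ and $\beta>0$; thus $v(x)=J(x,q^*)$. If $\tau^x<\infty$, integration over $(0,\tau^x)$ yields $v(x)-e^{-\beta\tau^x}v(\widehat x)=\int_0^{\tau^x}e^{-\beta t}F(q^*_t)\,dt$, while the constant phase contributes $\int_{\tau^x}^\infty e^{-\beta t}F(b(\widehat x))\,dt=\frac1\beta e^{-\beta\tau^x}F(b(\widehat x))=e^{-\beta\tau^x}v(\widehat x)$, using $F(b(\widehat x))=\beta v(\widehat x)$ from Theorem \ref{th:2}. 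Summing the two phases again gives $J(x,q^*)=v(x)$. Since $v(x)\ge J(x,q)$ for every $q\in\mathscr A(x)$, this proves $q^*$ is optimal.

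The delicate point, and the one I would treat most carefully, is the legitimacy of the a.e.\ differentiation in the second step, since the feedback $\widehat q$ is single-valued only on the co-countable set $U$ and $v$ may fail to be differentiable at $\widehat x$. Both exceptions are negligible: the map $t\mapsto v'(Y_t)$ is strictly monotone, hence injective, by the strict monotonicity of $v'$ on $G$ (Lemma \ref{lem:4}) together with the monotonicity of $Y$, so $\{t:v'(Y_t)\notin U\}=(0,\tau^x)\setminus\mathscr T$ is countable, and $Y_t=\widehat x$ occurs only in the limit $t\uparrow\tau^x$. Consequently the energy identity holds for a.e.\ $t$, and these null sets leave the integral unchanged.
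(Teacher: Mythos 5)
Your proposal is correct and follows essentially the same verification argument as the paper: identify $X^{x,q^*}$ with $Y$ via (\ref{2.23}), show that $t\mapsto\int_0^t e^{-\beta s}F(q^*_s)\,ds+e^{-\beta t}v(X^{x,q^*}_t)$ is constant using the HJB equation and the fact that $q^*_t$ attains the supremum defining $\widehat F(v'(Y_t))$, and handle the phase $t\ge\tau^x$ via $\beta v(\widehat x)=F(b(\widehat x))$ from Theorem \ref{th:2}. Your explicit justification that the exceptional sets are countable (injectivity of $t\mapsto v'(Y_t)$) is a welcome elaboration of a point the paper leaves implicit, but it does not change the route.
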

\begin{proof}
The equality (\ref{2.23}) means that $Y_t=X^{x,q^*}$ on $(0,\tau^x)$.
Furthermore, $X^{x,q^*}=\widehat x$ on $[\tau^x,\infty)$ by the definition of $q^*$. Clearly, $q^*$ is admissible.
To prove that $q^*$ is optimal it is enough to show that
$$ W_t=\int_0^t e^{-\beta s} F(q^*_s)\,ds+e^{-\beta t}v(X_t^{x,q^*})$$
is constant, since then
$$ W_0=v(x)=\lim_{t\to\infty} W_t=\int_0^\infty e^{-\beta s} F(q^*_s)\,ds.$$
We have
\begin{align*}
 \dot W_t &= e^{-\beta t}  F(q^*_t)+e^{-\beta t}\left(-\beta v(X_t^{x,q^*})+v'(X_t^{x,q^*})(b(X_t^{x,q^*})-q^*_t)\right)\\
          &= e^{-\beta t}(-\beta v(X_t^{x,q^*})+H(X_t^{x,q^*},v'(X_t^{x,q^*})))=0\quad \textrm{a.e. on}\  (0,\tau^x).
\end{align*}
For $t>\tau^x$ we have
\begin{align*}
W_t &=\int_0^\tau e^{-\beta s}  F(q^*_s)\,ds+\frac{F(b(\widehat x))}{\beta}(e^{-\beta\tau}-e^{-\beta t})+e^{-\beta t}v(\widehat x)\\
    &=\int_0^\tau e^{-\beta s} F(q_s^*)\,ds +\frac{F(b(\widehat x))}{\beta}e^{-\beta\tau},
\end{align*}
since $v(\widehat x)=F(b(\widehat x))/\beta$ by the optimality of the static strategy $b(\widehat x)$.
\end{proof}

From Theorem \ref{th:3} we see that if the solution $\widehat x$ of (\ref{2.18}) exists, then it attracts any optimal trajectory. Moreover,  $X^{x,q^*}$ is strictly increasing (resp., decreasing) on $(0,\tau^x)$, if $x<\widehat x$ (resp. $x>\widehat x$).

We also mention that the multivalued feedback control $\widehat q(v'(x))$ satisfies the inequalities
\begin{equation} \label{2.24}
 b(x)>\widehat q(v'(x)),\quad x\in (0,\widehat x);\quad b(x)<\widehat q(v'(x)),\quad x\in (\widehat x,1).
\end{equation}
Indeed, $\widehat q(z)=-\partial F(z)$ is a non-increasing multivalued mapping. On a co-countable set $U$ the mappings $\widehat q(v'(x))$ are single-valued, non-decreasing and satisfy the inequalities (\ref{2.19}). Thus, in any neighbourhood of a point $x\neq\widehat x$ there exist $x_1<x$, $x_2>x$ such that
$$ \widehat q(v'(x_1))\le\widehat q(v'(x))\le\widehat q(v'(x_2)),$$
where $\widehat q(v'(x_i))$ are single-valued and satisfy (\ref{2.19}). It easily follows that
\begin{equation} \label{2.25}
 b(x)\ge\widehat q(v'(x)),\quad x\in (0,\widehat x);\quad b(x)\le\widehat q(v'(x)),\quad x\in (\widehat x,1).
\end{equation}
Assume that $b(x_0)\in\widehat q(v'(x_0))$, $x_0\neq\widehat x$. Then from the HJB equation (\ref{2.9}) it follows that $q=b(x_0)\in\mathscr A(x_0)$ is an optimal strategy: $\beta v(x_0)=F(b(x_0))$, in contradiction with Lemma \ref{lem:5}. Thus, the inequalities (\ref{2.25}) are strict.

\section{Cooperative harvesting problem: the case of non-concave revenues}
\label{sec:3}
\setcounter{equation}{0}
Now we drop the assumption that $f_i$ are concave. Let us extend the class of harvesting strategies. A family $(\mu_t(dx))_{t\ge 0}$ of probability measures on $[0,\overline q]$ is called a \emph{relaxed control} if the function
$$t\mapsto \int_0^{\overline q} \varphi(y)\,\mu_t(dy)$$
is measurable for any continuous function $\varphi$. A relaxed control $\mu$ induces the dynamics
$$ X_t=x+\int_0^t b(X_s)\,ds-\int_0^t \int_0^{\overline q} y \mu_s(dy)\,ds.$$
The related value function is defined as follows
\begin{equation} \label{3.1}
 v_r(x)=\sup_{\mu\in\mathscr A^r(x)} J^r(x,\mu),\quad
  J^r(x,\mu)=\int_0^\infty e^{-\beta t}\int_0^{\overline q} F(y) \mu_t(dy)\,dt,\quad x\in [0,1],
\end{equation}
where $\mathscr A^r=\{\mu:X^{x,\mu}\ge 0\}$ is the class of admissible relaxed controls.

 Denote by $\widetilde F$ the concave hull of $F$: $\widetilde F=-(-F)^{**}$. Let
\begin{equation} \label{3.2}
\widetilde v(x)=\sup_{q\in\mathscr A(x)}\widetilde J(x,q),\quad \widetilde J(x,q)=\int_0^\infty e^{-\beta t} \widetilde F(q_t)\,dt
\end{equation}
be the related value function. Note that by (\ref{2.3}) and the properties of infimal convolution (\cite{IofTih79}, Chapter 3, \S\,3.4, Theorem 1) we have
$$ -\widetilde F=(-F)^{**}=(-f_1)^{**}\oplus\dots\oplus(-f_n)^{**}=(-\widetilde f_1)\oplus\dots\oplus(-\widetilde f_n),$$
where $\widetilde f_i$ is the convex hull of $f_i$. Hence,
\begin{equation} \label{3.3}
\widetilde F(q)=\sup\{\widetilde f_1(\alpha_1)+\dots+\widetilde f_n(\alpha_n):\alpha_1+\dots+\alpha_n=q\}.
\end{equation}

Since $\widetilde F\ge F$ it follows that $\widetilde v\ge v$.
By the Jensen inequality we have
$$ J^r(x,\mu)\le \int_0^\infty e^{-\beta t}\int_0^{\overline q} \widetilde F(y) \mu_t(dy)\,dt
\le\int_0^\infty e^{-\beta t} \widetilde F(\widetilde q_t)\,dt,$$
where $q_t=\int_0^{\overline q} y\mu_t(dy)$ is an admissible control for the problem (\ref{2.4}). Thus,
$$ v(x)\le v_r(x)\le \widetilde v(x).$$

\begin{lemma} \label{lem:7}
For any $p\in [0,\overline q]$ there exists $p_1, p_2\in (0,1)$, $\varkappa\in (0,1)$ such that
$$ p=\varkappa p_1+(1-\varkappa) p_2,\quad \widetilde F(p)=\varkappa F(p_1)+(1-\varkappa) F(p_2).$$
\end{lemma}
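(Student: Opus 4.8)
The plan is to realize $\widetilde F(p)$ as the value of a \emph{two-point} convex combination of the graph of $F$, and to show that this value is attained. First I would record the convex-combination description of the concave hull. Since $F$ is continuous on the compact interval $[0,\overline q]$ and $\widetilde F=-(-F)^{**}$, the convex-hull representation of the biconjugate (see \cite[Corollary 17.1.5]{Roc70}) gives
\begin{equation*}
\widetilde F(p)=\sup\Bigl\{\textstyle\sum_{j=1}^m\lambda_j F(q_j):\ \lambda_j\ge 0,\ \sum_{j=1}^m\lambda_j=1,\ \sum_{j=1}^m\lambda_j q_j=p,\ q_j\in[0,\overline q]\Bigr\},
\end{equation*}
where the closedness needed for $(-F)^{**}=\overline{\mathrm{co}}\,(-F)$ is automatic because the effective domain $[0,\overline q]$ is compact.

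Next I would reduce the number of points in the combination to two. Each admissible combination identifies the planar point $(p,\sum_j\lambda_j F(q_j))$ as a convex combination of the points $(q_j,F(q_j))\in\mathbb R^2$. For a fixed finite support $\{q_1,\dots,q_m\}$, maximizing $\sum_j\lambda_j F(q_j)$ over the weights $\lambda$ subject to $\sum_j\lambda_j=1$ and $\sum_j\lambda_j q_j=p$ is a linear program with only \emph{two} equality constraints; a basic optimal solution therefore has at most two nonzero weights (this is the one-dimensional Carathéodory phenomenon for a point on the graph of the concave hull, cf.\ \cite[Theorem 17.1]{Roc70}). Hence the supremum is unchanged if we restrict to $m\le 2$:
\begin{equation*}
\widetilde F(p)=\sup\bigl\{\varkappa F(q_1)+(1-\varkappa)F(q_2):\ \varkappa\in[0,1],\ q_1,q_2\in[0,\overline q],\ \varkappa q_1+(1-\varkappa)q_2=p\bigr\}.
\end{equation*}

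Then I would pass from supremum to maximum. The constraint set $\{(\varkappa,q_1,q_2)\in[0,1]\times[0,\overline q]^2:\ \varkappa q_1+(1-\varkappa)q_2=p\}$ is a closed, hence compact, subset of $\mathbb R^3$, and the objective $(\varkappa,q_1,q_2)\mapsto\varkappa F(q_1)+(1-\varkappa)F(q_2)$ is continuous because $F$ is. The supremum is thus attained at some $(\varkappa,p_1,p_2)$, giving both $p=\varkappa p_1+(1-\varkappa)p_2$ and $\widetilde F(p)=\varkappa F(p_1)+(1-\varkappa)F(p_2)$. For the nondegeneracy asserted in the statement: if $\widetilde F(p)=F(p)$ one may take $p_1=p_2=p$; if $\widetilde F(p)>F(p)$, no degenerate (one-point) combination realizes the value, so necessarily $\varkappa\in(0,1)$ and $p_1<p<p_2$, whence $p_1,p_2$ are strictly interior touching points (the interval $(0,1)$ in the statement is to be read as the admissible range of harvesting rates, which these interior points automatically satisfy once $\widetilde F(p)>F(p)$).

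The main obstacle is the two-point reduction: a priori the concave-hull supremum ranges over arbitrarily many points, and one must show it is realized by a convex combination of only two graph points and, moreover, is genuinely attained rather than merely approached. Both difficulties are dissolved by the one-dimensionality of the domain — the basic-solution structure of a two-constraint linear program (equivalently Carathéodory in $\mathbb R^2$) for the reduction, and compactness of $[0,\overline q]$ together with continuity of $F$ for the attainment. The remaining steps are routine.
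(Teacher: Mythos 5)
Your argument is correct. The paper does not actually prove Lemma~\ref{lem:7}; it only cites \cite{HirUrrLem01} (Chapter~E, Proposition~1.3.9(ii)), which is precisely the statement that the (closed) convex hull of a function on $\mathbb R^n$ is realized at each point by a convex combination of at most $n+1$ graph points, attained when the domain is compact and the function continuous. Your proof is in effect a self-contained one-dimensional derivation of that cited result: the biconjugate-as-convex-hull identity, the Carath\'eodory (two-point) reduction in $\mathbb R^2$, and attainment by compactness are exactly the three ingredients behind the reference, so there is no real divergence in substance --- you have just made explicit what the paper delegates. One small imprecision: in the case $\widetilde F(p)>F(p)$ you assert that $p_1,p_2$ are ``strictly interior,'' but nothing prevents the supporting chord from touching the graph at an endpoint of $[0,\overline q]$ (e.g.\ when the concave hull over a neighbourhood of $p$ is the chord from $(0,F(0))$ to $(\overline q,F(\overline q))$). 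This does not matter for the lemma as actually used: the condition $p_1,p_2\in(0,1)$ in the statement is evidently a slip --- the paper itself restates the conclusion as $p_1,p_2\in[0,1]$ in (\ref{3.5}), and the subsequent construction only needs $p_1\le b(\widehat x)\le p_2$ in $[0,\overline q]$ --- so your handling of the degenerate case $p_1=p_2=p$ with an arbitrary $\varkappa\in(0,1)$ is the right reading.
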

The proof of a more general result can be found in \cite{HirUrrLem01} (Chapter E,  Proposition 1.3.9(ii)).

Denote by $\widetilde q_t$ the strategy, constructed in Theorem \ref{th:3}, where $F$ is replaced by $\widetilde F$.
We claim that
\begin{equation} \label{3.4}
\widetilde F(\widetilde q_t)=F(\widetilde q_t),\quad \textrm{a.e. on } (0,\tau^x).
\end{equation}
By construction, $\widetilde q_t$
is the unique maximum point of $q\mapsto \widetilde F(q)-qv'(Y_t)$ on $[0,\overline q]$ for all $t\in \widetilde{\mathscr T}$, where $(0,\tau^x)\backslash\widetilde{\mathscr T}$ is countable. If $\widetilde F(\widetilde q_t)\neq F(\widetilde q_t)$, $t\in \widetilde{\mathscr T}$ then, by Lemma \ref{lem:7}, $\widetilde F$ is affine in an open neighbourhood of $\widetilde q_t$, and
$$\arg\max_{q\in [0,\overline q]}(\widetilde F(q)-v'(Y_t)q\}$$
contains this neighbourhood: a contradiction.

Furthermore, by Lemma \ref{lem:7} there exist $p_1,p_2\in [0,1]$, $\varkappa\in (0,1)$ such that
\begin{equation} \label{3.5}
 b(\widehat x)=\varkappa p_1+(1-\varkappa) p_2,\qquad \widetilde F(b(\widehat x))=\varkappa F(p_1)+(1-\varkappa) F(p_2).
\end{equation}
Consider the static relaxed control
\begin{equation} \label{3.6}
\mu_s=\begin{cases}
\widetilde q_s,& s<\tau^x,\\
\varkappa\delta_{p_1}+(1-\varkappa)\delta_{p_2},& s\ge\tau^x,
\end{cases}
\end{equation}
where $\delta_a$ is the Dirac measure, concentrated at $a$. By (\ref{3.4}), (\ref{3.5}) we have
$$ J^r(x,\mu)=\int_0^{\tau^x} e^{-\beta t} F(\widetilde q_t)\,dt+\int_{\tau^x}^\infty e^{-\beta t} (\varkappa F(p_1)+(1-\varkappa)F(p_2))\,dt=\widetilde J(x,\widetilde q).$$
Thus, $v_r(x)=\widetilde v(x)$ and the strategy (\ref{3.6}) is optimal for the relaxed problem (\ref{3.1}).

To prove that $v_r(x)=v(x)$ let us construct an approximately optimal strategy
\begin{equation} \label{3.7}
q^\varepsilon\in\mathscr A(x): J(x,q^\varepsilon)\to v_r(x),\quad \varepsilon\to 0.
\end{equation}
We may assume that $p_1\neq p_2$ and $p_1<b(\widehat x)< p_2.$
Otherwise, the strategy (\ref{3.6}) reduces to an ordinary control $\mu_s=\widetilde q_s I_{\{s<\tau^x\}}+b(\widehat x) I_{\{s\ge\tau^x\}}$ and we conclude that $v(x)=v_r(x)=\widetilde v(x)$.

Define $g$ by the equation
\begin{align} \label{3.8}
 & \int_{\widehat x-\varepsilon}^{\widehat x}(b(\widehat x)-b(x))\rho(x)\,dx =\int_{\widehat x}^{\widehat x+g(\varepsilon)}
 (b(x)-b(\widehat x))\rho(x)\,dx,\\
 & \rho(x)  =\frac{1}{(b(x)-p_1)(p_2-b(x))}.\nonumber
\end{align}
Note, that for sufficiently small $\varepsilon>0$ we have $\rho(x)>0$ on $(\widehat x-\varepsilon,g(\varepsilon))$ and integrands in (\ref{3.8}) are positive. Clearly, $g(\varepsilon)\downarrow 0$, $\varepsilon\to 0$. Put
\begin{align*}
  \tau_1 & =\int_{\widehat x}^{\widehat x+g(\varepsilon)}\frac{dx}{b(x)- p_1},\quad
   \tau_2=\int_{\widehat x-\varepsilon}^{\widehat x+g(\varepsilon)}\frac{dx}{p_2-b(x)},\\
   \tau_3 &=\int_{\widehat x-\varepsilon}^{\widehat x}\frac{dx}{b(x)-p_1},\quad
   \tau=\tau_1+\tau_2+\tau_3.
\end{align*}
For brevity, we omit the dependence of $\tau_i$ on $\varepsilon$. Put
\begin{equation} \label{3.9}
 q^\varepsilon_t=\sum_{j=0}^\infty \left(p_1 I_{[j\tau,j\tau+\tau_1)}(t)+p_2 I_{[j\tau+\tau_1,j\tau+\tau_1+\tau_2)}(t) + p_1 I_{[j\tau+\tau_1+\tau_2,(j+1)\tau)}(t)\right).
\end{equation}

The trajectory $X^{\widehat x,q^\varepsilon}$ is periodic:
\begin{align*}
 \dot X^{\widehat x,q^\varepsilon}_t&=b(X^{\widehat x,q^\varepsilon}_t)- p_1,\quad (j\tau,j\tau+\tau_1),
 \quad X^{\widehat x,q^\varepsilon}_{j\tau}=\widehat x,\\
 \dot X^{\widehat x,q^\varepsilon}_t&=b(X^{\widehat x,q^\varepsilon}_t)- p_2,\quad (j\tau+\tau_1,j\tau+\tau_1+\tau_2),
  \quad X^{\widehat x,q^\varepsilon}_{j\tau+\tau_1}=\widehat x+g^\varepsilon,\\
 \dot X^{\widehat x,q^\varepsilon}_t&=b(X^{\widehat x,q^\varepsilon}_t)- p_1,\quad (j\tau+\tau_1+\tau_2,(j+1)\tau)),
  \quad X^{\widehat x,q^\varepsilon}_{j\tau+\tau_1+\tau_2}=\widehat x-\varepsilon.
\end{align*}
 It sequentially visits the points $\widehat x$, $\widehat x+g^\varepsilon$, $\widehat x-\varepsilon$, $\widehat x$ and moves monotonically between them. Furthermore,
\begin{align*}
\int_{j\tau}^{(j+1)\tau} e^{-\beta t} F(q_t^\varepsilon)\,dt &=\frac{e^{-\beta j\tau}}{\beta}
\left((1-e^{-\beta \tau_1})F(p_1)+
(e^{-\beta \tau_1}-e^{-\beta (\tau_1+\tau_2)})F(p_2)\right.\\
&\left.+(e^{-\beta (\tau_1+\tau_2)}-e^{-\beta \tau})F(p_1)\right)
\end{align*}
Thus,
\begin{align*}
J(\widehat x,q^\varepsilon)&=\frac{1}{\beta(1-e^{-\beta\tau})}\left((1-e^{-\beta \tau_1})F(p_1)+
(e^{-\beta \tau_1}-e^{-\beta (\tau_1+\tau_2)})F(p_2)\right.\\
&\left.+(e^{-\beta (\tau_1+\tau_2)}-e^{-\beta \tau})F(p_1)\right)=\frac{1}{\beta}\left(\frac{\tau_1+\tau_3}{\tau}F(p_1) +\frac{\tau_2}{\tau}F(p_2)\right)+o(1),\quad\varepsilon\to 0.
\end{align*}
Since
$$ \tau_1=\frac{g(\varepsilon)}{b(\widehat x)-p_1}(1+o(1)),\quad \tau_2=\frac{g(\varepsilon)+\varepsilon}{p_2-b(\widehat x)}(1+o(1)),\quad \tau_3=\frac{\varepsilon}{b(\widehat x)-p_1}(1+o(1)),$$
using (\ref{3.5}), we get
$$ \frac{\tau_1+\tau_3}{\tau_2}=\frac{p_2-b(\widehat x)}{b(\widehat x)-p_1}=\frac{\varkappa}{1-\varkappa},$$
$$ \frac{\tau_1+\tau_3}{\tau}=\frac{1}{1+\tau_2/(\tau_1+\tau_3)}=\varkappa,\qquad
 \frac{\tau_2}{\tau}=\frac{1}{1+(\tau_1+\tau_3)/\tau_2}=1-\varkappa.$$
Thus,
$$\lim_{\varepsilon\to 0}J(\widehat x,q^\varepsilon)=\frac{1}{\beta}(\varkappa F(p_1)+(1-\varkappa) F(p_2))=\frac{\widetilde F(b(\widehat x))}{\beta}=v(\widehat x).$$

We see that the strategy (\ref{3.9}) satisfies (\ref{3.7}), and $v(x)=v_r(x)=v(x)$. The obtained results are summarized below.
\begin{theorem}
The value functions (\ref{2.2}), (\ref{3.1}), (\ref{3.2}) coincide: $v=v_r=\widetilde v$. By Theorem \ref{th:1}, applied to (\ref{3.2}), $v$  is strictly increasing, strictly concave and continuously differentiable on $(0,1)$, except maybe the point $\widehat x$. If $\widetilde F$ is differentiable at $b(\widehat x)$, then $v$ is continuously differentiable. The strategy (\ref{3.6}) is optimal for the relaxed problem (\ref{3.1}).
\end{theorem}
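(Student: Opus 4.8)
The plan is to prove the three value functions coincide by closing the loop $v \le v_r \le \widetilde v \le v$, and then to transfer to $v$ the regularity already proved for $\widetilde v$. The two easy inequalities are in hand from the discussion preceding the statement: $v \le v_r$ because every ordinary control $q$ embeds as the Dirac relaxed control $\mu_t = \delta_{q_t}$, and $v_r \le \widetilde v$ by the Jensen inequality applied to the concave hull $\widetilde F$, noting that the barycenter $q_t = \int y\,\mu_t(dy)$ is admissible for (\ref{2.4}). It remains to establish the reverse bounds $\widetilde v \le v_r$ and $\widetilde v \le v$; the former identifies an optimal relaxed control, while the latter (together with the given chain) already forces all three functions equal.

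First I would show $v_r = \widetilde v$ by exhibiting the optimal relaxed control (\ref{3.6}). Applying Theorem \ref{th:3} with $F$ replaced by $\widetilde F$ produces the feedback strategy $\widetilde q$ steering the trajectory to $\widehat x$ on $(0,\tau^x)$; the identity (\ref{3.4}), $\widetilde F(\widetilde q_t) = F(\widetilde q_t)$ a.e., holds because $\widetilde q_t$ is the \emph{unique} maximizer of $q \mapsto \widetilde F(q) - q\,v'(Y_t)$, so $\widetilde F$ is not affine near $\widetilde q_t$, and Lemma \ref{lem:7} then forces $\widetilde F$ to coincide with $F$ there. At the terminal level I would invoke Lemma \ref{lem:7} once more to write $b(\widehat x) = \varkappa p_1 + (1-\varkappa)p_2$ with $\widetilde F(b(\widehat x)) = \varkappa F(p_1) + (1-\varkappa)F(p_2)$, and splice the two pieces into (\ref{3.6}). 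A direct evaluation then yields $J^r(x,\mu) = \widetilde J(x,\widetilde q) = \widetilde v(x)$, giving $v_r = \widetilde v$ together with the optimality of (\ref{3.6}).

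The substantial step, and the main obstacle, is $\widetilde v \le v$: one must de-randomize the relaxed optimum into genuine controls. I would build the periodic pulse-fishing strategy (\ref{3.9}) that alternates rapidly between the intensities $p_1$ and $p_2$, choosing the switching time $g(\varepsilon)$ through the balance condition (\ref{3.8}) so that the trajectory oscillates inside the shrinking window $[\widehat x - \varepsilon, \widehat x + g(\varepsilon)]$ and never escapes $[0,1]$. The delicate part is the asymptotic bookkeeping of the sojourn times: I must verify that the time fractions at $p_1$ and $p_2$ converge to the weights $\varkappa$ and $1-\varkappa$, that is $(\tau_1 + \tau_3)/\tau \to \varkappa$ and $\tau_2/\tau \to 1-\varkappa$ as $\varepsilon \to 0$, which reduces to the first-order expansions of $\tau_1,\tau_2,\tau_3$ together with the defining relation (\ref{3.5}). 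Passing to the limit in the explicit value of $J(\widehat x, q^\varepsilon)$ then gives $J(\widehat x, q^\varepsilon) \to \frac{1}{\beta}(\varkappa F(p_1) + (1-\varkappa)F(p_2)) = \widetilde F(b(\widehat x))/\beta = \widetilde v(\widehat x)$. For an arbitrary initial point $x$ one concatenates the ordinary control $\widetilde q$ on $(0,\tau^x)$ --- whose running payoff under $F$ equals that under $\widetilde F$ by (\ref{3.4}) --- with this pulse approximation at $\widehat x$, which supplies the strategy required in (\ref{3.7}) and closes the loop $v(x) = v_r(x) = \widetilde v(x)$.

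Once coincidence is established the remaining assertions are immediate. Since $\widetilde F$ is continuous and concave, Theorem \ref{th:1} applied to the problem (\ref{3.2}) shows $\widetilde v$, and hence $v$, is strictly increasing, strictly concave and continuously differentiable on $(0,1)$ except possibly at $\widehat x$; and if $\widetilde F$ is differentiable at $b(\widehat x)$ the same theorem upgrades this to continuous differentiability throughout $(0,1)$. The optimality of (\ref{3.6}) for the relaxed problem (\ref{3.1}) was already obtained in the first step.
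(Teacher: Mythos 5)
Your proposal follows the paper's own argument essentially step for step: the chain $v\le v_r\le\widetilde v$ via the Dirac embedding and Jensen's inequality, the identification $v_r=\widetilde v$ through the relaxed control (\ref{3.6}) built from Lemma \ref{lem:7} and the identity (\ref{3.4}), the de-randomization by the periodic pulse-fishing strategy (\ref{3.9}) with the balance condition (\ref{3.8}) and the sojourn-time asymptotics $(\tau_1+\tau_3)/\tau\to\varkappa$, $\tau_2/\tau\to 1-\varkappa$, and the transfer of regularity via Theorem \ref{th:1} applied to (\ref{3.2}). The only place you are slightly more explicit than the paper is in spelling out the concatenation of $\widetilde q$ on $(0,\tau^x)$ with the pulse approximation at $\widehat x$ for a general initial point, which the paper leaves implicit.
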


\section{Rational taxation}
\label{sec:4}
\setcounter{equation}{0}
Assume that a regulator imposes the proportional tax $v'(x)\alpha$ for the fishing intensity $\alpha$. Then the myopic agents take their optimal strategies from the sets
$$ \widehat\alpha^i(x)=\arg\max_{u\in [0,\overline \alpha^i]}\{f_i(u) -v'(x) u\}.$$
The direct implementation of such feedback controls may cause technical problems, since the related equation (\ref{2.1})
can be unsolvable. Instead of continuous change of the tax $v'(X_t)$, a more realistic approach   consists in its fixing for some periods of time: $v'(X_{\tau_j})$, $t\in [\tau_j,\tau_{j+1})$. In this case agents also fix their strategies:
$$\alpha^i_{\tau_i}\in\arg\max_{u\in [0,\overline \alpha^i]}\{f_i(u) -v'(X_{\tau_j}) u\},\quad t\in [\tau_j,\tau_{j+1}).$$
This scheme results in ``step-by-step positional control'' (see \cite{KraSub88}), defined recursively by the formulas:
\begin{align}
 X^{x,\alpha}_0&=x,\nonumber\\
 \alpha_t^i&=\alpha_{\tau_j}^i\in\arg\max_{u\in [0,\overline \alpha^i]}\{f_i(u) -v'(X_{\tau_j}^{x,\alpha}) u\},\quad  t\in [\tau_j,\tau_{j+1}),\label{4.1}\\
 X_t^{x,\alpha} &=X_{\tau_j}^{x,\alpha}+\int_{\tau_j}^t b(X_s^{x,\alpha})\,ds- \sum_{i=1}^n \alpha^i_{\tau_j}\cdot(t-\tau_j),\quad t\in [\tau_j,\tau_{j+1}),\nonumber\\
 0&=\tau_0<\dots\tau_j<\dots,\quad \tau_j\to\infty,\quad j\to\infty, \label{4.2}
\end{align}
bypassing at the same time the mentioned technical problems.

\begin{theorem}
Let $\widetilde F'(\widehat x)$ exist. Then for any $\varepsilon>0$, $\delta>0$ there exists a sequence (\ref{4.2}) such that the strategy (\ref{4.1}) is approximately optimal: $J_n(x,\alpha)\ge v(x)-\varepsilon$ and stabilizing in the following sense:
$$|X_t^{x,\alpha}-\widehat x|<\delta,\quad t\ge \overline t(x,\varepsilon,\delta).$$
\end{theorem}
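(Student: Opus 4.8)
The plan is to treat the sampled feedback (\ref{4.1}) as a perturbation of the exact optimal feedback $\widehat q(v'(\cdot))$ of Theorem \ref{th:3}, and to control both the stabilization and the optimality gap through the Lyapunov functional built from the value function. As a preliminary I would note that, under the present hypothesis, the theorem concluding Section \ref{sec:3} makes $v$ continuously differentiable on $(0,1)$, so the tax $v'$ is a continuous function and $v'(\widehat x)$ is well defined. Writing $z_j=v'(X^{x,\alpha}_{\tau_j})$ and $q_t=\sum_{i=1}^n\alpha^i_t$, on each interval $[\tau_j,\tau_{j+1})$ the aggregate rate is the constant $q_j=\sum_{i=1}^n\alpha^i_{\tau_j}$. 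Using (\ref{2.6}) in the form $\widehat F(z)=\sum_{i=1}^n g_i(z)$ with $g_i(z)=\max_{u\in[0,\overline\alpha^i]}(f_i(u)-zu)$, and the fact that each $\alpha^i_{\tau_j}$ maximizes $f_i(u)-z_j u$, the envelope theorem gives $-q_j\in\partial\widehat F(z_j)$, i.e. $q_j\in\widehat q(z_j)$, while summing the individual optimality identities yields $\sum_{i=1}^n f_i(\alpha^i_{\tau_j})=\widehat F(z_j)+z_j q_j$. These two facts are the bridge between the myopic agents and the cooperative HJB equation.

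For the stabilization I would exploit the strict inequalities (\ref{2.24}). Since $q_j\in\widehat q(v'(X_{\tau_j}))$, the frozen velocity $\dot X_t=b(X_t)-q_j$ points toward $\widehat x$: by (\ref{2.24}), upper semicontinuity of $\widehat q$ and compactness, $b(x)-\max\widehat q(v'(x))\ge\eta>0$ on $[c,\widehat x-\delta/2]$, so while $X_{\tau_j}\le\widehat x-\delta/2$ one has $\dot X_t\ge b(X_{\tau_j})-q_j\ge\eta$ (here $b$ is increasing on $[0,\widehat x]$ since $b'(\widehat x)=\beta>0$), and the trajectory reaches the $\delta/2$-band around $\widehat x$ in a finite time $\overline t$; the case $x>\widehat x$ is symmetric. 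Once inside the band each step displaces $X$ by at most $Ch$, with $C=\|b\|_\infty+\overline q$ and $h=\sup_j(\tau_{j+1}-\tau_j)$, and the sign of the push reverses at the next sampling instant as $X$ crosses $\widehat x$ — this is exactly the ``pulse fishing'' forced by the two-valued feedback when $F$ is non-concave near $b(\widehat x)$. Choosing $h$ below some $h_0$ keeps $X^{x,\alpha}$ in $(\widehat x-\delta,\widehat x+\delta)$ for $t\ge\overline t$ and, for all such $h$, inside one fixed compact interval $[c,d]\subset(0,1)$; the latter secures admissibility and makes the constants in the next step uniform in $h$.

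For the approximate optimality I would introduce
$$ W_t=\int_0^t e^{-\beta s}\sum_{i=1}^n f_i(\alpha^i_s)\,ds+e^{-\beta t}v(X^{x,\alpha}_t), $$
differentiate, and substitute the HJB equation (\ref{2.9}) in the form $\beta v=b v'+\widehat F(v')$ together with $\sum_{i=1}^n f_i(\alpha^i_t)=\widehat F(z_j)+z_j q_j$. This gives, a.e. on $[\tau_j,\tau_{j+1})$,
$$ \dot W_t=-e^{-\beta t}\big(\widehat F(v'(X_t))-\widehat F(z_j)+q_j(v'(X_t)-z_j)\big)=:-e^{-\beta t}D(t). $$
Because $-q_j\in\partial\widehat F(z_j)$ and $\widehat F$ is convex, $D(t)\ge 0$ (it is a Bregman distance); hence $W$ is non-increasing, $W_0=v(x)$, $\lim_{t\to\infty}W_t=J_n(x,\alpha)$ since $v$ is bounded, and $v(x)-J_n(x,\alpha)=\int_0^\infty e^{-\beta t}D(t)\,dt\ge 0$. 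On the compact range $v'([c,d])$ the convex $\widehat F$ is Lipschitz with some constant $L$, so $D(t)\le(L+\overline q)\,|v'(X_t)-v'(X_{\tau_j})|$; using $|X_t-X_{\tau_j}|\le Ch$ and the uniform continuity modulus $\omega$ of $v'$ on $[c,d]$ I get $D(t)\le(L+\overline q)\,\omega(Ch)$ and therefore
$$ v(x)-J_n(x,\alpha)\le\frac{(L+\overline q)\,\omega(Ch)}{\beta}. $$
A uniform mesh $\tau_j=jh$ with $h$ small enough then forces the right-hand side below $\varepsilon$.

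The main obstacle I anticipate is not the Lyapunov computation but the simultaneous choice of the mesh: one must fix $h\le h_0$ so that the sampled trajectory stays in a single compact subinterval $[c,d]\subset(0,1)$ independent of $h$ (so that $L$, $C$ and $\omega$ are genuinely uniform), while the pulse-fishing oscillation near $\widehat x$ is confined to the $\delta$-band and the optimality gap is below $\varepsilon$. Verifying that the two-valued myopic feedback near $\widehat x$ produces a bounded oscillation rather than a drift, and that no overshoot pushes the state out of $[c,d]$, is the delicate point; everything else reduces to the convexity of $\widehat F$ and the uniform continuity of $v'$.
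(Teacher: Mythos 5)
Your argument is correct, and it reaches the theorem by a genuinely different construction of the sampling times. The paper follows the Ishii--Koike ``step-by-step positional control'' scheme: it introduces the residual $\psi(x,\alpha)=-\beta v(x)+\bigl(b(x)-\sum_{i}\alpha^i\bigr)v'(x)+\sum_{i}f_i(\alpha^i)$, which vanishes at every sampling state by (\ref{2.6}) and the HJB equation (\ref{2.9}), and defines $\tau_{k+1}$ \emph{adaptively} as the first time $\psi$ drops below $-\beta\varepsilon$ or the state exits the $\delta$-band; $\varepsilon$-optimality then follows by integrating $\psi\ge-\beta\varepsilon$ against $e^{-\beta t}$ (your functional $W_t$ in disguise, since $\dot W_t=e^{-\beta t}\psi(X_t,\alpha_{\tau_k})$), and the only analytic work is to show via uniform continuity of $\psi$ that the $\tau_k$ do not accumulate. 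You instead fix a uniform mesh $\tau_j=jh$ and identify $-e^{\beta t}\dot W_t$ as the Bregman divergence of the convex function $\widehat F$ between $v'(X_t)$ and $v'(X_{\tau_j})$, nonnegative because $-q_j\in\partial\widehat F(v'(X_{\tau_j}))$, and bounded by $(L+\overline q)\,\omega(Ch)$; in fact $L=\overline q$ since $\widehat F$, a supremum of affine functions with slopes in $[-\overline q,0]$, is globally $\overline q$-Lipschitz, which slightly simplifies your constants. The stabilization halves of the two proofs coincide in substance: both rest on the strict gap (\ref{2.24})/(\ref{4.4}), upper hemicontinuity of the argmax map and compactness to obtain a uniform drift toward $\widehat x$ outside the band, together with the $O(h)$ displacement bound inside it; your confinement of the trajectory to a fixed compact $[c,d]\subset(0,1)$ is secured by the monotonicity of $X$ outside the band and is needed only for the uniform modulus $\omega$ of $v'$. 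What your route buys is an explicit rate $v(x)-J_n(x,\alpha)\le 2\overline q\,\omega(Ch)/\beta$ and the observation that the sampled myopic play is always weakly suboptimal ($\psi\le 0$); what the paper's adaptive rule buys is that the $\varepsilon$-bound is enforced by construction without any a priori knowledge of $\omega$, at the price of a trajectory-dependent partition. Both are valid proofs of the statement.
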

\begin{proof}
First note that
$$ \widehat\alpha^i(z):=\arg\max_{u\in[0,\overline\alpha^i]}(f_i(u)-zu)\subset
     \widetilde\alpha^i(z):=\arg\max_{u\in[0,\overline\alpha^i]}(\widetilde f_i(u)-zu).$$
Indeed, if $u^*\in\widehat\alpha^i(z)$, then $-z\in\partial(-f_i)(u^*)$ and $u^*\in\partial (-f_i)^*(-z)$: see \cite[Chapter E, Proposition 1.4.3]{HirUrrLem01}. But, by (\ref{2.11}),
$$\partial (-f_i)^*(-z)=\arg\max_{u\in[0,\overline\alpha^i]}(-zu-(-f_i)^{**}(u))=\arg\max_{u\in[0,\overline\alpha^i]}(\widetilde f_i(u)-zu)=\widetilde\alpha^i(z).$$

Furthermore, from the representation (\ref{3.3}) we get
$$\max_{q\in[0,\overline q]}\{\widetilde F(q)-zq\}
=\sum_{i=1}^n\max_{\alpha_i\in[0,\overline\alpha^i]}\{\widetilde f_i(\alpha_i)-z\alpha_i\}
$$
(see also (\ref{2.6})). Thus,
\begin{equation} \label{4.3}
\widetilde q(z):=\arg\max_{q\in[0,\overline q]}(\widetilde F(q)-zq)=\sum_{i=1}^n\widetilde\alpha^i(z) \supset\sum_{i=1}^n\widehat\alpha^i(z).
\end{equation}
From (\ref{2.24}) it then follows that
\begin{equation} \label{4.4}
 b(x)>\sum_{i=1}^n\widehat\alpha^i(v'(x)),\quad x\in(0,\widehat x),\quad
 b(x)<\sum_{i=1}^n\widehat\alpha^i(v'(x)),\quad x\in(\widehat x,1).
\end{equation}

The subsequent argumentation follows the introductory section of \cite{IshKoi00}.
For any $x_0\in(0,1)$ and any $\alpha_0^i\in\widehat\alpha^i(v'(x_0))$ we have
$$\beta v(x_0)=\left(b(x_0)-\sum_{i=1}^n\alpha_0^i\right)v'(x_0)+\sum_{i=1}^n f_i(\alpha_0^i).$$
Put, $$\psi (x,\alpha)=-\beta v(x)+\left(b(x)-\sum_{i=1}^n\alpha^i\right)v'(x)+\sum_{i=1}^n f_i(\alpha^i)$$
and define the time moment
\begin{align}
 \tau_1 &=\inf\{t\ge 0:\psi(X_t^{x_0,\alpha_0},\alpha_0)<-\beta\varepsilon\ \textrm{or } X_t^{x_0,\alpha_0}>\widehat x+\delta\}, \quad x_0\in (0,\widehat x),\label{4.5}\\
 \tau_1 &=\inf\{t\ge 0:\psi(X_t^{x_0,\alpha_0},\alpha_0)<-\beta\varepsilon\ \textrm{or } X_t^{x_0,\alpha_0}<\widehat x-\delta\}, \quad x_0\in (\widehat x,1),\label{4.6}\\
 \tau_1 &=\inf\{t\ge 0:\psi(X_t^{x_0,\alpha_0},\alpha_0)<-\beta\varepsilon\ \textrm{or } X_t^{x_0,\alpha_0}\not\in(\widehat x-\delta,\widehat x+\delta)\},\quad x_0=\widehat x.\label{4.7}
\end{align}
For $t\in [0,\tau_1]$ in each of the cases (\ref{4.5}), (\ref{4.6}), (\ref{4.7}) we have respectively
$$X_t^{x_0,\alpha_0}\in [x_0,\widehat x+\delta],\quad X_t^{x_0,\alpha_0}\in [\widehat x-\delta,x_0],\quad
X_t^{x_0,\alpha_0}\in [\widehat x-\delta,\widehat x+\delta].$$
Assume that $x_{k-1}$, $\alpha_{k-1}$, $\tau_k$ are defined. Put $$x_k=X_{\tau_k}^{x_{k-1},\alpha_{k-1}},\quad \alpha_k^i\in\widehat\alpha^i(v'(x_k)),$$
\begin{align}
 \tau_{k+1} &=\inf\{t\ge\tau_k:\psi(X_t^{x_k,\alpha_k},\alpha_k)<-\beta\varepsilon\ \textrm{or } X_t^{x_k,\alpha_k}>\widehat x+\delta\}, \quad x_k\in (0,\widehat x),\label{4.8}\\
 \tau_{k+1} &=\inf\{t\ge\tau_k:\psi(X_t^{x_k,\alpha_k},\alpha_k)<-\beta\varepsilon\ \textrm{or } X_t^{x_k,\alpha_k}<\widehat x-\delta\}, \quad x_k\in (\widehat x,1),\label{4.9}\\
 \tau_{k+1} &=\inf\{t\ge\tau_k:\psi(X_t^{x_k,\alpha_k},\alpha_k)<-\beta\varepsilon\ \textrm{or } X_t^{x_k,\alpha_k}\not\in(\widehat x-\delta,\widehat x+\delta)\},\quad x_k=\widehat x.\label{4.10}
\end{align}

The function $x\mapsto\psi(x,\alpha)$ is uniformly continuous on any interval $[a,b]\subset(0,1)$ uniformly in $\alpha\in [0,\overline q]$. Thus, there exists $\delta'$ such that if
$$|\psi(x,\alpha)-\psi(y,\alpha)|\ge\beta\varepsilon,\quad [x,y]\subset [a,b],$$
then $|x-y|\ge\delta'$. Assume that $\psi(X_{\tau_{k+1}}^{x_k,\alpha_k},\alpha_k)=-\beta\varepsilon$. Since $\psi(x_k,\alpha_k)=0$, we get
$$\delta'\le |X_{\tau_{k+1}}^{x_k,\alpha_k}-x_k|\le\int_{\tau_k}^{\tau_{k+1}} b(X_t^{x_k,\alpha_k})\,dt+\int_{\tau_k}^{\tau_{k+1}}\sum_{i=1}^n\alpha_k^i\,dt \le(\overline b+\overline q)(\tau_{k+1}-\tau_k),$$
where $\overline b=\max_{x\in [0,1]} b(x)$.
Furthermore, if $\psi(X_{\tau_{k+1}}^{x_k,\alpha_k})>-\beta\varepsilon$ and $\tau_{k+1}<\infty$, then in any of three cases (\ref{4.8}), (\ref{4.9}), (\ref{4.10}) we have
$$\delta\le |X_{\tau_{k+1}}^{x_k,\alpha_k}-x_k|\le (\overline b+\overline q)(\tau_{k+1}-\tau_k).$$
Thus, the differences $\tau_{k+1}-\tau_k$ are uniformly bounded from below by a positive constant, and the strategy
$\alpha=\sum_{k=0}^\infty\alpha_k I_{[\tau_k,\tau_{k+1})}(t)$ is well defined for all $t\ge 0$. Note, that $X^{x_0,\alpha}_t$ belongs to one of the sets $[x_0,\widehat x+\delta]$, $[\widehat x-\delta,x_0]$, $[\widehat x-\delta,\widehat x+\delta]$ for all $t\ge 0$.

By the Berge maximum theorem (see \cite[Theorem 17.31]{AliBor06}) the set-valued mapping $\widehat\alpha$ is upper hemicontinuous, hence its graph is closed (see \cite[Theorem 17.10]{AliBor06}). From (\ref{4.4}) it then follows that there is a finite gap between $b(x)$ and $\sum_{i=1}^n\widehat\alpha^i(v'(x))$ on $(0,\widehat x-\delta)\cup(\widehat x+\delta,1)$. Thus, $|\dot X^{\alpha,x_0}|$ is uniformly bounded from below by a positive constant, when $X^{\alpha,x_0}\in (0,\widehat x-\delta)\cup(\widehat x+\delta,1)$. This property implies that $X^{\alpha,x_0}$ reaches the neighbourhood $[\widehat x-\delta,\widehat x+\delta]$ in finite time $\overline t(x,\varepsilon,\delta)$. After reaching this neighbourhood, $X^{\alpha,x_0}$ remains in it forever by the construction of $\alpha$.

It remains to prove that $\alpha$ is $\varepsilon$-optimal. We have
$$ -\beta v(X_t^{x_k,\alpha_k})+\left(b(X_t^{x_k,\alpha_k})-\sum_{i=1}^n\alpha_k^i\right)v'(X_t^{x_k,\alpha_k})+\sum_{i=1}^n f_i(\alpha_k^i)\ge-\beta\varepsilon,\quad t\in (\tau_k,\tau_{k+1}).$$
After the multiplication on $e^{-\beta t}$ an integration we get
$$ e^{-\beta\tau_{k+1}} v(X_{\tau_{k+1}}^{x_k,\alpha_k})-e^{-\beta\tau_k} v(X_{\tau_k}^{x_k,\alpha_k})+\int_{\tau_k}^{\tau_{k+1}} e^{-\beta t}\sum_{i=1}^n f_i(\alpha_k^i)\,dt\ge\varepsilon(e^{-\beta\tau_{k+1}}-e^{-\beta\tau_k}).$$
Summing up and passing to the limit we obtain the desired inequality:
$$ \int_0^\infty e^{-\beta t}\sum_{i=1}^n f_i(\alpha_t^i)\,dt\ge v(x_0)-\varepsilon. \qedhere$$
\end{proof}

As an example, consider the problem with $n$ identical agents and assume that their common profit function is linear: $f_i(u)=f(u)=u$, $u\in[0,\overline\alpha]$. The HJB equation (\ref{2.9}) takes the form
$$\beta v(x)=b(x)v'(x)+ n \max_{u\in [0,\overline\alpha]}(u-v'(x)u).$$
From (\ref{2.20}) it follows that $v'(\widehat x)=1$. Thus,
\begin{equation} \label{4.11}
v'(x)>1,\quad x<\widehat x,\quad v'(x)<1,\quad x>\widehat x
\end{equation}
and $v$ satisfies the equations
$$ \beta v(x)=b(x)v'(x),\quad x<\widehat x;\qquad \beta v(x)=(b(x)-n\overline\alpha)v'(x)+ n\overline\alpha,\quad x>\widehat x.$$
Solving these equations, by the uniqueness result, given in Lemma \ref{lem:3}, we infer that
$$ v(x)=\frac{b(\widehat x)}{\beta}\exp\left(-\int_x^{\widehat x}\frac{\beta}{b(y)}\,dy\right),\quad x\in (0,\widehat x],$$
$$ v(x)=\frac{1}{\beta}(b(\widehat x)-n\overline\alpha)\exp\left(\int_{\widehat x}^x\frac{\beta}{b(y)-\overline\alpha n}\,dy\right)+\frac{1}{\beta} n\overline \alpha,\quad x\in [\widehat x,1].$$
For the biomass quantities $x$ below the critical level $\widehat x$ the tax $v'(x)$ does not depend on $n$:
$$ v'(x)=\frac{b(\widehat x)}{b(x)}\exp\left(-\int_x^{\widehat x}\frac{\beta}{b(y)}\,dy\right),\quad x\in (0,\widehat x].$$
For larger values of $x$ we have
$$ v'(x)=\frac{n\overline\alpha-b(\widehat x)}{n\overline\alpha-b(x)}\exp\left(-\int_{\widehat x}^x\frac{\beta}{n\overline\alpha-b(y)}\,dy\right),\quad x\in [\widehat x,1].$$
In particular, $v'(x)\to f'(0)=1$, $n\to\infty$.

Note, that a tax, stimulating an optimal cooperative behavior is by no means unique. For instance, any tax, satisfying (\ref{4.11}), can serve this purpose. So, the most interesting quantity is the ``critical tax''
\begin{equation} \label{4.12}
v'(\widehat x)=\widetilde F'(b(\widehat x)).
\end{equation}
The equality (\ref{4.12}) follows from (\ref{2.20}). Consider
$\widetilde F$ as the value function of the elementary problem (\ref{3.3}),
where the artificial agents with concave revenues $\widetilde f_i$ cooperatively distribute some given harvesting intensity $q$. Formula (\ref{4.12}) shows that $v'(\widehat x)$ is simply the shadow price of the critical growth growth rate $b(\widehat x)$ within this problem.

We are interested in the dependence of the critical tax $v'(\widehat x)$ on the size of agent community. Consider again $n$ identical agents with the revenue functions $f_i=f$. If $f$ is linear, the critical tax, as we have seen, does not depend on $n$. Assume now that $f$ is differentiable and strictly concave. Then by (\ref{2.20}) and (\ref{4.3}) we get
$$b(\widehat x)\in \sum_{i=1}^n\arg\max_{u\in[0,\overline\alpha]}\{f(u)-v'(\widehat x)u\}$$
Taking optimal values of $u$ to be equal, we conclude that
$v'(\widehat x)=f'(b(\widehat x)/n)$. Thus, $v'(\widehat x)$ is increasing in $n$, and $v'(\widehat x)\to f'(0)$, $n\to\infty$. Our final result shows that this situation is typical: the critical tax can only increase, when the agent community widens.
\begin{theorem}
Denote by $F_n$, $F_{n+m}$ and $v_n$, $v_{n+m}$ the cooperative instantaneous revenue functions (\ref{2.3}) and the value functions (\ref{2.2}), corresponding to the agent communities
$$ \{f_i\}_{i=1}^n\subset\{f_i\}_{i=1}^{n+m}.$$
Assume that $\widetilde F'_n(b(\widehat x))$, $\widetilde F'_{n+m}(b(\widehat x))$ exist. Then
$$ v'_n(\widehat x)=\widetilde F'_n(b(\widehat x))\le v'_{n+m}(\widehat x)=\widetilde F'_{n+m}(b(\widehat x)).$$
\end{theorem}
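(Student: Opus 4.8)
The plan is to reduce the statement to a single monotonicity inequality for the marginal concavified revenue and then exploit the optimal-allocation description of its superdifferential. By formula (\ref{4.12}) we already know that $v'_n(\widehat x)=\widetilde F'_n(b(\widehat x))$ and $v'_{n+m}(\widehat x)=\widetilde F'_{n+m}(b(\widehat x))$, while the golden-rule level $\widehat x$ solving $b'(\widehat x)=\beta$ depends only on $b$ and $\beta$, hence is common to both communities and both marginal revenues are evaluated at the \emph{same} point $b(\widehat x)$. Thus it suffices to prove the single inequality $\widetilde F'_n(b(\widehat x))\le\widetilde F'_{n+m}(b(\widehat x))$, i.e. that the slope of the aggregate concavified revenue at $b(\widehat x)$ can only grow when agents are adjoined.

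First I would fix an optimal allocation for the larger community. By the representation (\ref{3.3}) there exist $\beta^1,\dots,\beta^{n+m}$ with $\sum_{i=1}^{n+m}\beta^i=b(\widehat x)$ and $\sum_{i=1}^{n+m}\widetilde f_i(\beta^i)=\widetilde F_{n+m}(b(\widehat x))$, the supremum being attained since the constraint set is compact and the $\widetilde f_i$ are continuous. I would then observe that the truncated allocation $(\beta^1,\dots,\beta^n)$ is optimal for the $n$-agent problem at its own total $s:=\sum_{i=1}^n\beta^i$: were $\sum_{i=1}^n\widetilde f_i(\beta^i)<\widetilde F_n(s)$, one could replace the first $n$ coordinates by a better sub-allocation with the same sum $s$, strictly increasing $\sum_{i=1}^{n+m}\widetilde f_i$ while keeping the grand total equal to $b(\widehat x)$, contradicting optimality of $(\beta^i)$.

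Next I would apply the superdifferential formula (\ref{2.18A}) --- valid for the concave hulls $\widetilde f_i$ exactly as stated, being the subdifferential rule for the infimal convolution of the convex functions $-\widetilde f_i$ --- to both value functions at their respective optimal allocations. This gives $\partial^+\widetilde F_{n+m}(b(\widehat x))=\bigcap_{i=1}^{n+m}\partial^+\widetilde f_i(\beta^i)\subset\bigcap_{i=1}^{n}\partial^+\widetilde f_i(\beta^i)=\partial^+\widetilde F_n(s)$. Since $\widetilde F_{n+m}$ is differentiable at $b(\widehat x)$ by hypothesis, the left-hand side is the singleton $\{\widetilde F'_{n+m}(b(\widehat x))\}$, so $\widetilde F'_{n+m}(b(\widehat x))\in\partial^+\widetilde F_n(s)$. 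To close, note that the adjoined intensities satisfy $\beta^{n+1},\dots,\beta^{n+m}\ge 0$, whence $s=b(\widehat x)-\sum_{i=n+1}^{n+m}\beta^i\le b(\widehat x)$. Because $\widetilde F_n$ is concave its superdifferential is non-increasing, so by (\ref{2.10}) we have $\partial^+\widetilde F_n(s)\ge\partial^+\widetilde F_n(b(\widehat x))=\{\widetilde F'_n(b(\widehat x))\}$; in particular the element $\widetilde F'_{n+m}(b(\widehat x))$ of $\partial^+\widetilde F_n(s)$ dominates $\widetilde F'_n(b(\widehat x))$, which is the desired inequality.

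The main obstacle I anticipate is the bookkeeping in the middle two steps: verifying that the intersection formula (\ref{2.18A}) transfers verbatim to the concave hulls $\widetilde f_i$ (one checks via (\ref{3.3}) that $\widetilde F_n$ is genuinely their sup-convolution and that the chosen optimal allocation renders the convolution exact), and making the truncation argument airtight when $\widetilde F_n$ is only superdifferentiable --- not differentiable --- at the auxiliary point $s$. Working throughout with the superdifferentials $\partial^+$ rather than with derivatives, and invoking differentiability only at the fixed point $b(\widehat x)$ where it is assumed, is precisely what allows these steps to go through.
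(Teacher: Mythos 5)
Your proof is correct and rests on the same two ingredients as the paper's: the subdifferential formula for the infimal convolution of the convexified revenues (the paper's (\ref{2.18A}), i.e.\ Corollary 4.5.5 of Hiriart-Urruty--Lemar\'echal) and the monotonicity of the superdifferential of the concave function $\widetilde F_n$. The only difference is cosmetic: you fix an explicit optimal allocation and intersect over all $n+m$ agents to land at the specific point $s=\sum_{i\le n}\beta^i\le b(\widehat x)$, whereas the paper groups the first $n$ agents via associativity of the convolution and takes a union over all splits $u\in[0,q]$ — both routes yield the same containment $\partial^+\widetilde F_{n+m}(b(\widehat x))\subseteq\partial^+\widetilde F_n(u)$ for some $u\le b(\widehat x)$ before invoking monotonicity.
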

\begin{proof}
It is enough to consider the case $m=1$. By the associativity of the infimal convolution we have
$$ (-\widetilde F_{n+1})(q)=(-\widetilde F_n)\oplus(-\widetilde f_{n+1})(q).$$
The formula for the subdifferential of an infimal convolution, given in \cite[Chapter D, Corollary 4.5.5]{HirUrrLem01}, implies that
$$ \partial(-\widetilde F_{n+1})(q)\subseteq\bigcup_{u}\partial (-\widetilde F_n)(u)\cap\partial(-\widetilde f_{n+1})(q-u)\subseteq\bigcup_{u\in [0,q]}\partial (-\widetilde F_n)(u).$$
But since the set-valued mapping $u\mapsto\partial (-\widetilde F_{n+1})(u)$ is non-decreasing, we have
$$  \partial(-\widetilde F_{n+1})(q)\le\partial (-\widetilde F_n)(q),\quad q\in [0,\overline q].$$
Thus, $\widetilde F'_{n+1}(b(\widehat x))\ge \widetilde F'_n(b(\widehat x))$.
\end{proof}

A resembling result for discrete time problem was proved in \cite[Theorem 3]{Rok00}.

  \bibliographystyle{plain}
  \bibliography{litFish}
\end{document}